\def\newrefformat#1#2{%
  \@namedef{pr@#1}##1{#2}}
\def\fref#1{\@prettyref#1:}
\def\@prettyref#1:#2:{%
  \expandafter\ifx\csname pr@#1\endcsname\relax%
    \PackageWarning{prettyref}{Reference format #1\space undefined}%
    \ref{#1:#2}%
  \else%
    \csname pr@#1\endcsname{#1:#2}%
  \fi%
}
\newcommand{\mynewthm}[3][]{%
  \def\PARAM{#1}
  \ifx\PARAM\empty
  \newtheorem{#2}[dummythm]{#3}
  \else
  \newtheorem{#2}{#3}[#1]
  \fi
  \newtheorem*{#2*}{#3}%
  \newrefformat{#2}{#3~\ref{##1}}%
}
\theoremstyle{plain}
\theoremstyle{definition}
\theoremstyle{remark}
\newcommand{\myenumlabel}[1]{\textnormal{(\roman{#1})}}
\renewcommand{\today}{%
  \number\day\space
  \ifcase\month\or
  January\or February\or March\or April\or May\or June\or
  July\or August\or September\or October\or November\or December\fi
  \space \number\year}
\newcounter{cycprfcnt}
\newenvironment{cycprf}%
{\begin{list}{\PackageWarning{begnac}{Label required for cycprf}}%
  {%
    \setcounter{cycprfcnt}{1}
    \setlength{\itemindent}{0.5\leftmargin}%
    \setlength{\leftmargin}{0pt}%
    \newcommand{\cpcurr}{\myenumlabel{cycprfcnt}}%
    \newcommand{\cpnext}{\addtocounter{cycprfcnt}{1}\cpcurr}%
    \newcommand{\cpnum}[1]{\setcounter{cycprfcnt}{##1}\cpcurr}%
    \newcommand{\cpfirst}{\cpnum{1}}%
    \newcommand{\impnext}{\cpcurr{} $\Longrightarrow$ \cpnext.}%
    \newcommand{\impfirst}{\cpcurr{} $\Longrightarrow$ \cpfirst.}%
  }%
}%
{\qedhere\end{list}}%
\def\indsym#1#2{%
  \setbox0=\hbox{$\m@th#1x$}%
  \kern\wd0%
  \hbox to 0pt{\hss$\m@th#1\mid$\hbox to 0pt{$\m@th#1^{#2}$\hss}\hss}%
  \lower.9\ht0\hbox to 0pt{\hss$\m@th#1\smile$\hss}%
  \kern\wd0}
\def\nindsym#1#2{%
  \setbox0=\hbox{$\m@th#1x$}%
  \kern\wd0%
  \hbox to 0pt{\hss$\m@th#1\not$\kern1.4\wd0\hss}
  \hbox to 0pt{\hss$\m@th#1\mid$\hbox to 0pt{$\m@th#1^{#2}$\hss}\hss}%
  \lower.9\ht0\hbox to 0pt{\hss$\m@th#1\smile$\hss}%
  \kern\wd0}
\def\dotminussym#1#2{%
  \setbox0=\hbox{$\m@th#1-$}%
  \kern.5\wd0%
  \hbox to 0pt{\hss\hbox{$\m@th#1-$}\hss}%
  \raise.6\ht0\hbox to 0pt{\hss$\m@th#1.$\hss}%
  \kern.5\wd0}
\newcommand{\dotminus}{\mathbin{\mathpalette\dotminussym{}}}
\renewcommand{\emptyset}{\varnothing}
\renewcommand{\setminus}{\smallsetminus}
\def\models{\vDash}
\DeclareMathOperator{\tp}{tp}
\DeclareMathOperator{\tS}{S}
\DeclareMathOperator{\Conv}{Conv}
\newcommand{\fB}{\mathfrak{B}}
\newcommand{\cC}{\mathcal{C}}
\newcommand{\cD}{\mathcal{D}}
\newcommand{\cL}{\mathcal{L}}
\newcommand{\cM}{\mathcal{M}}
\newcommand{\cP}{\mathcal{P}}
\newcommand{\bE}{\mathbb{E}}
\newcommand{\bP}{\mathbb{P}}
\newcommand{\setR}{\mathbb{R}}
\newcommand{\setC}{\mathbb{C}}
\newcommand{\setN}{\mathbb{N}}
\newcommand{\setQ}{\mathbb{Q}}
\newcommand{\xx}{{\mathbf x}}
\newcommand{\KK}{{\mathbf K}}
\newcommand{\sfP}{{\mathsf P}}
\newcommand{\half}[1][1]{\hbox{$\frac{#1}{2}$}}
\begin{document}

\title{Continuous and random Vapnik-Chervonenkis classes}

\author{Ita\"\i{} \textsc{Ben Yaacov}}

\address{Ita\"\i{} \textsc{Ben Yaacov} \\
  Universit\'e de Lyon \\
  Universit\'e Lyon 1 \\
  Institut Camille Jordan, UMR 5208 CNRS \\
  43 boulevard du 11 novembre 1918 \\
  F-69622 Villeurbanne Cedex \\
  France}

\urladdr{http\string://math.univ-lyon1.fr/\textasciitilde begnac/}

\thanks{Research initiated during the workshop ``Model theory of
  metric structures'', American Institute of Mathematics Research
  Conference Centre, 18 to 22 September 2006}
\thanks{Research supported by
  ANR chaire d'excellence junior (projet THEMODMET) and
  by the European Commission Marie Curie Research Network ModNet}

\date{\today}
\keywords{Vapnik-Chervonenkis class, dependent relation, dependent
  theory, mean width, randomisation}
\subjclass[2000]{03C95,03C45,52A38}

\begin{abstract}
  Nous d\'emontrons que si $T$ est une th\'eorie d\'ependante, sa randomis\'ee
  de Keisler $T^R$ l'est aussi.

  Pour faire cela nous g\'en\'eralisons la notion d'une classe de
  Vapnik-Chervonenkis \`a des familles de fonctions \`a valeurs dans
  $[0,1]$ (une classe de Vapnik-Chervonenkis \emph{continue}),
  et nous caract\'erisons les familles de fonctions ayant cette
  propri\'et\'e par la vitesse de croissance de la largeur moyenne d'une
  famille de compacts convexes associ\'es.
\end{abstract}

\maketitle

In this paper we answer a question lying at the intersection of
two currently active research themes in model theory.

The first theme is that of dependent theories, i.e., first
order theories which do no possess the independence property,
first defined by Shelah \cite{Shelah:1971-StabilityAndFCP}.
A formula $\varphi(x,y)$ is said to have the \emph{independence property},
or to be \emph{independent},
in a theory $T$ if for every $n$ one can find a model
$\cM \models T$, $b_i \in M$ for $i < n$, and $a_w \in M$ for $w \subseteq n$, such that
$\cM \models \varphi(a_w,b_i) \Longleftrightarrow i \in w$.
The theory $T$ has the independence property if at least one formula
has it in $T$; equivalently, a theory $T$ is dependent if every
formula is.
A stable theory is necessarily dependent.
(More generally, a theory $T$ is unstable if and only if it is
independent or has the \emph{strict order property}.
See \cite[Th\'eor\`eme~12.38]{Poizat:Cours} for the proof in classical
first order logic.
It can be adapted easily to continuous logic following standard
translation methods.)

The recent use of properties of dependent theories  for the
solution of the so-called Pillay Conjecture in
\cite{Hrushovski-Peterzil-Pillay:GroupsMeasureNIP} earned them a
considerable increase in general interest.
It should be pointed out that some refer to dependent theories as
NIP (Non Independence Property) theories.
Since non-NIP theories are quite ``wild'', research in this area
concentrates on theories which are not non-NIP.

The second theme of research is that of continuous logic and metric
structures.
Studying metric structures using a model theoretic approach dates back
to  Henson \cite{Henson:NonstandardHulls} and
Krivine and Maurey \cite{Krivine-Maurey:EspacesDeBanachStables}.
Continuous first order logic was much more
recently introduced in \cite{BenYaacov-Usvyatsov:CFO} as a formalism
for this study, shifting the point of view much closer to
classical first order logic.
The independence property has a natural analogue for metric
structures, and one may speak of dependent continuous theories.
In contrast with the body of work concerning classical dependent
theories, to the best of our knowledge dependent continuous theories
have hardly been studied to date.

The intersection of these two themes in which we are interested stems
from H.\ Jerome Keisler's
randomisation construction.
This first appeared in \cite{Keisler:Randomizing}, where to every
(complete) first order theory $T$ he associated the first order theory
of spaces of random variables in models of $T$.
Since classical first order logic is not entirely adequate for the
treatment of spaces of random variables, which are metric by nature,
this construction was subsequently improved
to produce for every theory $T$ the \emph{continuous} theory
$T^R$ of spaces of random variables taking values in models of $T$
(see \cite{BenYaacov-Keisler:MetricRandom}).
While we shall not go through the details of the construction, we shall
point the main properties of $T^R$ in \fref{sec:DependentTheories}.
The author has shown that
\begin{enumerate}
\item The randomisation of a stable theory is stable
  (\cite{BenYaacov-Keisler:MetricRandom}, in preparation).
\item On the other hand, the randomisation of a simple unstable theory
  is not simple.
  More generally, the randomisation of an independent theory cannot be
  simple, and is generally wild.
\end{enumerate}
In other words, independent theories are somehow wild with respect
to randomisation (even if they do satisfy other tameness properties
such as simplicity), while stable theories are tame.
It is natural to ask whether the dividing line for tame randomisation
lies precisely between dependent and independent theories.
A more precise instance of this question would be,
is the randomisation of a dependent theory dependent?
While Keisler's construction was only stated for a complete classical
theory $T$ it can be carried out just as well for an arbitrary
continuous theory
$T$ (with some minor technical changes) and the question can also be
posed when $T$ is continuous.

The article
\cite{Hrushovski-Peterzil-Pillay:GroupsMeasureNIP} mentioned above
relates dependent theories and probability measures on types, i.e.,
with types in the randomised theory.
This suggested that dependent theories should be tame with respect to
randomisation, so  the answer to the question above should be positive.
(To our best recollection this was first conjectured by Anand Pillay
in the AIM workshop on Model Theory of Metric Structures, September 2006).

In order to give a positive answer we shall consider some
purely combinatorial aspects of the independence property,
observed by Shelah \cite{Shelah:1971-StabilityAndFCP}
and independently by Vapnik and Chervonenkis
\cite{Vapnik-Chervonenkis:UniformConvergence}
and seek to prove they extend to the continuous setting.
(The connection between dependent theories and
the work of Vapnik and Chervonenkis was pointed out by Laskowski
\cite{Laskowski:VapnikChervonenkisClasses}.)
Doing so we will need a new means for measuring the size of a set, as
merely counting points will no longer do.
The Gaussian mean width turns out to serve our purposes quite well
(the Lebesgue measure of the set once inflated a little will also be
useful, but to a much lesser extent).
The Gaussian mean width commutes, in a sense, with the
randomisation construction, and it follows painlessly that the
randomisation of a dependent relation is again a dependent relation.
Thus the technical core of this paper has nothing to do with model
theory and deals rather with
combinatorics and the geometry of convex compacts.

\smallskip

\fref{sec:MeanWidth} consists of a few basic facts regarding convex
compacts in $\setR^n$ and their mean width.

The combinatorial core of the article is in
\fref{sec:FuzzyContinuousVC} where continuous Vapnik-Chervonenkis
classes and dependent relations are characterised via the
growth rate of the mean width of an associated family of sets.

\fref{sec:Crush} consists of a technical interlude where we prove that
continuous combinations of dependent relations are dependent.

In \fref{sec:Randomise} we consider random dependent
relations.
Using the mean width criterion from
\fref{sec:FuzzyContinuousVC}
we show that if a random family of functions is uniformly dependent
then its expectation is dependent as well.

The proper model theoretic contents of this paper is restricted to
\fref{sec:DependentTheories}.
We define dependent theories and the randomisation of a theory.
The main theorem, asserting that the randomisation of a dependent
theory is dependent, follows easily from earlier results.
We also extend to continuous logic a classical result saying
that in order to verify that
a theory is dependent it suffices to verify that every
formula $\varphi(x,\bar y)$ is dependent where $x$ is a single variable.

\section{Facts regarding convex compacts and mean width}
\label{sec:MeanWidth}

This section contains few properties of the mean width function.
The author is much indebted to Guillaume Aubrun for having introduced
him to this notion and its properties.
All the results presented here are easy to verify and are
either folklore (see for example
\cite{Aubrun-Szarek:TensorProductsAndVolume}) or (in the
case of integrals of convex compacts, as far as we know)
minor generalisations thereof.

Let $A \subseteq \setR^n$ a bounded set.
For $y \in \setR^n$ define $h_A(y) = \sup_{x \in A} \langle x,y\rangle$.
As a function of $y$, $h_A$ is positively homogeneous and
sub-additive, and thus in particular convex.

Let $u \in S^{n-1}$.
The real numbers
$t_1 = h_A(u)$ and $t_2 = -h_A(-u)$ are then minimal and maximal,
respectively, so that $t_2 \leq \langle x,u\rangle \leq t_1$ for all $x\in A$,
i.e., such that $A$ lies
between the two hyperplanes $t_2u + u^\perp$ and $t_1u + u^\perp$.
The \emph{width} of $A$ in the direction $u \in S^{n-1}$ is therefore
defined to be $w(A,u) = h_A(u) + h_A(-u)$.

Let $K = \overline{\Conv}(A)$ be the closed convex envelope of $A$,
i.e., the intersection of all closed half-spaces containing $A$.
Then $h_K = h_A$ and:
\begin{gather*}
  K = \bigcap_{u\in S^{n-1}} \{x\colon \langle x,u\rangle \leq h_K(u)\}.
\end{gather*}
We may thus identify a convex compact $K \subseteq \setR^n$ with
$h_K\colon S^{n-1} \to \setR$.
In this case the supremum in the definition of $h_K$ is attained at an
extremal point of $K$.

It is easy to observe that the function $h_K(u)$ is monotone,
positively homogeneous and additive in $K$, i.e.,
$K \subseteq K' \Longrightarrow h_K(u) \leq h_{K'}(u)$, $h_{\alpha K}(u) = \alpha h_K(u)$ for $\alpha \geq 0$ and
\begin{align*}
  h_{K+K'}(u)
  & = \max_{x\in K+K'} \langle x,u\rangle
  = \max_{y\in K,z\in K'} \langle y+z,u\rangle \\
  & = \max_{y\in K} \langle y,u\rangle + \max_{z\in K'} \langle z,u\rangle
  = h_K(u) + h_{K'}(u).
\end{align*}

Let $(X,\fB,\mu)$ be a measure space,
$\KK$ a mapping from $X$ to the space of convex compacts in $\setR^n$.
Say that $\KK$ is \emph{measurable} (respectively, integrable) if
$\omega\mapsto h_{\KK(\omega)}(u)$ is for all $u \in S^1$.
Notice that $u \mapsto h_{\KK(\omega)}(u)$ is $b(\KK(\omega))$-Lipschitz where
$b(K) = \max \{h_K(u)\colon u \in S^{n-1}\}$.
Thus, if $h_\KK(u)$ is measurable for all $u$ in some dense
(and possibly countable) subset of $S^{n-1}$ then
$b(\KK)$ is measurable and thus $\KK$ is.
If $\KK$ is integrable define:
\begin{gather*}
  h(u) = \int h_\KK(u)\, d\mu, \\
  K = \int \KK \, d\mu = \bigcap_{u \in S^{n-1}} \{x\colon \langle x,u\rangle \leq h(u)\}.
\end{gather*}
Clearly $K$ is a convex compact, and if
$\xx\colon X \to \setR^n$ satisfies $\xx(\omega) \in \KK(\omega)$ a.e.\ then
$\int \xx\, d\mu \in \int \KK \, d\mu$.
We claim furthermore that $h_K = h$.
Indeed, it is clear by definition of $K$ that $h_K \leq h$.
Conversely, given $u \in S^{n-1}$  we may complete it to an orthonormal
basis $u_0 = u, u_1, \ldots, u_n$.
For each $\omega \in X$ there is a unique $x_\omega \in \KK(\omega)$ such that
the tuple $(\langle x_\omega,u_0\rangle,\ldots,\langle x_\omega,u_{n-1}\rangle)$ is maximal in lexicographical
order (among all $x \in \KK(\omega)$).
In particular $\langle x_\omega,u\rangle = h_{\KK(\omega)}(u)$.
Moreover, the mapping $\xx\colon \omega \mapsto x_\omega$ is measurable,
$x = \int \xx\,d\mu \in K$ and
$h_K(u) \geq \langle x,u\rangle = h(u)$.
Thus $h_K = h$ as required.

The \emph{mean width} of $K$ is classically defined as:
\begin{gather*}
  w(K) = \int_{S^{n-1}} w(K,u) d\mu = 2\int_{S^{n-1}} h_K(u) d\sigma,
\end{gather*}
where $\sigma$ is the normalised Lebesgue measure on the sphere.

\begin{lem}
  \label{lem:MeanWidthProperties}
  The mean width is a monotone, additive, positively homogeneous
  function of compact convex subsets of $\setR^n$.
  Moreover, if $\KK$ is an integrable family of convex compacts then
  \begin{gather*}
    w\left( \int \KK\, d\mu \right) = \int w(\KK)\,d\mu.
  \end{gather*}
\end{lem}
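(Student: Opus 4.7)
The plan is that every assertion of the lemma reduces to a property of $h_K$ that has already been established in the preamble, combined (for the integral identity) with Fubini's theorem.

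First I would dispatch monotonicity, additivity, and positive homogeneity of $w$. Since $w(K) = 2\int_{S^{n-1}} h_K(u)\,d\sigma$ is obtained by integrating $h_K$ against the fixed positive measure $\sigma$, and since we have already observed that $K \mapsto h_K(u)$ is monotone, additive, and positively homogeneous pointwise in $u$, these three properties transfer immediately to $w$. The only point worth mentioning is that $h_K$ is continuous on $S^{n-1}$ (being Lipschitz, as noted in the text), so the integrand is $\sigma$-measurable and the integral is well-defined and finite for any convex compact $K$.

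Next I would address the integral formula $w(\int \KK\,d\mu) = \int w(\KK)\,d\mu$. The key identity, already proved just above the lemma, is that
\begin{gather*}
  h_{\int \KK \, d\mu}(u) = \int h_{\KK}(u) \, d\mu \quad \text{for every } u \in S^{n-1}.
\end{gather*}
Plugging this into the definition of $w$ gives
\begin{gather*}
  w\!\left(\int \KK\,d\mu\right) = 2\int_{S^{n-1}} \!\int h_{\KK(\omega)}(u) \, d\mu(\omega)\, d\sigma(u),
\end{gather*}
and the claim will follow by swapping the two integrals to obtain $\int 2\int_{S^{n-1}} h_{\KK(\omega)}(u)\, d\sigma(u)\, d\mu(\omega) = \int w(\KK)\,d\mu$.

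The only real step to justify is the application of Fubini's theorem, which requires joint measurability and integrability of $(\omega,u) \mapsto h_{\KK(\omega)}(u)$ on $X \times S^{n-1}$. Joint measurability follows from the fact that in $\omega$ the function is measurable (by definition of integrability of $\KK$) while in $u$ it is continuous, hence the map is a Carath\'eodory function and therefore jointly measurable. Integrability follows since $|h_{\KK(\omega)}(u)| \le b(\KK(\omega))$, and $b(\KK)$ was already shown to be measurable; the hypothesis that $\KK$ is integrable forces $b(\KK) \in L^1(\mu)$ (otherwise $\int h_\KK(u)\,d\mu$ would diverge on an open set of directions), so Tonelli applies and Fubini is justified. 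This verification is the one mildly technical point of the proof; everything else is bookkeeping.
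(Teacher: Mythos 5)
Your proof is correct and follows essentially the same route as the paper's: the three algebraic properties are read off from the corresponding pointwise properties of $K \mapsto h_K(u)$, and the integral identity comes from $h_{\int \KK\,d\mu} = \int h_{\KK}\,d\mu$ combined with Fubini's theorem. The paper merely compresses this by treating additivity and positive homogeneity as special cases of the integral formula and leaving the Fubini hypotheses implicit where you spell them out.
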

\begin{proof}
  Monotonicity of $w$ follows from monotonicity (in $K$) of
  $h_K$.
  Additivity and positive homogeneity are special cases of the
  summability which follows from earlier observations via Fubini's
  Theorem.
\end{proof}

As it happens it will be easier to calculate the following
variant of the mean width:
\begin{dfn}
  The \emph{Gaussian mean width} of a convex compact $K$ is defined as
  \begin{gather*}
    w_G(K) = \bE[w(K,G_n)] = 2\bE[h_K(G_n)],
  \end{gather*}
  where $G_n \sim N(0,I_n)$ (i.e., $G_n = (g_0,\ldots,g_{n-1})$ where $g_0,\ldots,g_{n-1}$ are
  independent random variables, $g_i \sim N(0,1)$).
\end{dfn}

Since the
distribution of $N(0,I_n)$ is rotation-invariant, the random variables
$\|G_n\|_2$ and $\frac{G_n}{\|G_n\|_2}$ are independent.
Let $\gamma_n = \bE[\|G_n\|_2]$.
We obtain:
\begin{gather*}
  w_G(K) = \bE[\|G_n\|_2w(K,G_n/\|G_n\|_2)] = \bE[ \|G_n\|_2 ] \bE[w(K,G_n/\|G_n\|_2)]
  = \gamma_n w(K).
\end{gather*}
One can further calculate that
\begin{gather*}
  \gamma_n =
  \sqrt{2}\frac{\Gamma\left(\frac{n+1}{2}\right)}{\Gamma\left(\frac{n}{2}\right)}.
  \intertext{As $\Gamma$ is $\log$-convex we obtain:}
  \gamma_n \leq
  \sqrt{2\frac{\Gamma\left(\frac{n}{2}+1\right)}{\Gamma\left(\frac{n}{2}\right)}}
  = \sqrt{2\frac{n}{2}} = \sqrt{n}, \\
  \gamma_n \geq
  \sqrt{2\frac{\Gamma\left(\frac{n+1}{2}\right)}{\Gamma\left(\frac{n-1}{2}\right)}}
  = \sqrt{2\frac{n-1}{2}} = \sqrt{n-1}.
\end{gather*}
Whence:
\begin{gather*}
  \sqrt{n-1} \leq \gamma_n \leq \sqrt{n}
\end{gather*}
Thus for example, if $B^n$ is the unit ball in $\setR^n$ then
\begin{gather*}
  w_G(B^n) = \gamma_n w(B^n) = 2\gamma_n \approx 2\sqrt{n}.
\end{gather*}

\begin{lem}
  \label{lem:GaussianMeanWidthProperties}
  The Gaussian mean width is a monotone, additive, positively
  homogeneous, function of compact convex subsets of $\setR^n$,
  and for an integrable family $\KK$:
  $w_G\left( \int \KK\, d\mu \right) = \int w_G(\KK)\,d\mu$.
\end{lem}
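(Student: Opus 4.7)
The plan is to observe that the Gaussian mean width is just a positive rescaling of the classical mean width: the identity $w_G(K) = \gamma_n w(K)$, derived in the displayed calculation immediately preceding the lemma statement, reduces every claim to the corresponding assertion in \fref{lem:MeanWidthProperties}. Since $\gamma_n > 0$ depends only on the ambient dimension $n$, monotonicity, additivity, positive homogeneity, and the integral formula for $w_G$ follow formally from those for $w$.

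For completeness I would also record a direct argument from the definition $w_G(K) = 2\bE[h_K(G_n)]$, without routing through the constant $\gamma_n$. The point is that $h_K(y)$, viewed as a function of $K$ for each fixed $y$, is already known to be monotone, additive, and positively homogeneous; taking expectation over $y = G_n$ preserves these three properties and yields the first three claims. For the integral formula, one combines the pointwise identity $h_{\int \KK\, d\mu}(u) = \int h_{\KK(\omega)}(u)\, d\mu(\omega)$ established earlier in the section with Fubini's theorem, exchanging the expectation over $G_n$ with the integral over $\omega$:
\begin{align*}
  w_G\Bigl(\int \KK\, d\mu\Bigr) = 2\bE\Bigl[\int h_{\KK(\omega)}(G_n)\, d\mu(\omega)\Bigr] = \int 2\bE[h_{\KK(\omega)}(G_n)]\, d\mu(\omega) = \int w_G(\KK)\, d\mu.
\end{align*}

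I do not anticipate a serious obstacle: the lemma is a direct analogue of \fref{lem:MeanWidthProperties} and transfers by either of the above routes. The only mild technical point is the justification of Fubini, which follows from measurability of $\omega \mapsto h_{\KK(\omega)}(u)$ built into the definition of an integrable family $\KK$, together with the implicit integrability of $b(\KK)$ guaranteeing that the iterated integrands are in fact in $L^1$.
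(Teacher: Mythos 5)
Your proposal is correct and matches the paper's argument, which says exactly that the lemma ``follows from \fref{lem:MeanWidthProperties} (or is proved identically)'' --- your first route is the reduction via $w_G = \gamma_n w$, and your second is the identical direct proof. Both are fine; no gap.
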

\begin{proof}
  Follows from \fref{lem:MeanWidthProperties} (or is proved
  identically).
\end{proof}

Let us calculate the mean width of the cube $[-1,1]^n$.
The maximum $h_K(y) = \max_{x\in[-1,1]^n} \langle x,y\rangle$ is always attained at an
extremal point, i.e.,
$h_K(y) = \max_{x \in \{-1,1\}^n} \langle x,y\rangle = \|y\|_1$.
Thus:
\begin{gather*}
  w_G([-1,1]^n) = 2\bE[ \|G_n\|_1 ]
  = 2n \bE[ |G_1| ] = 2n\sqrt{\frac{2}{\pi}}.
\end{gather*}
Thus, for $\varepsilon > 0$ we have:
\begin{gather}
  \label{eq:CubeMeanWidth}
  w_G([0,\varepsilon]^n) = \varepsilon n\sqrt{\frac{2}{\pi}}.
\end{gather}

\section{Fuzzy and continuous Vapnik-Chervonenkis classes}
\label{sec:FuzzyContinuousVC}

Let us start with a few reminders regarding the Vapnik-Chervonenkis
classes.
We shall follow Chapter 5 of van den Dries
\cite{vandenDries:oMinimal}.

Let us fix a set $X$ and a family of subsets $\cC \subseteq \cP(X)$.
Recall that $[X]^n$ denotes the collection of all subsets
of $X$ of size $n$, and let
$\cP^f(X) = \bigcup_{n<\omega} [X]^n$ denote the collection of finite subsets
of $X$.
For $F \in \cP^f(X)$ and $n < \omega$ let:
\begin{gather*}
  \cC \cap F = \{C\cap F\colon C \in \cC\}, \\
  f_\cC(n) = \max \{|\cC \cap F|\colon F \in [X]^n\}.
\end{gather*}

Clearly, $f_\cC(n) \leq 2^n$.
Define the \emph{Vapnik-Chervonenkis index} of $\cC$, denoted
$VC(\cC)$, to be the minimal $d$ such that $f_\cC(d) < 2^d$,
or infinity if no such $d$ exists.
If $VC(\cC) < \infty$ then $\cC$ is a \emph{Vapnik-Chervonenkis class}.

Let $p_d(x) = \sum_{k<d} \binom{x}{k} \in \setQ[x]$,
observing this is a polynomial of degree $d-1$.
\begin{fct}
  \label{fct:VapnikChervonenkisIndexBound}
  If $d = VC(\cC) < \infty$ then
  $f_\cC(n) \leq p_d(n)$ for all $n$.
\end{fct}

This can be viewed as a dichotomy result:
either $|\cC \cap F|$ is maximal (given $|F|$) for
arbitrarily large finite $F \subseteq X$, or it is always quite small
(polynomial rather than exponential).
It follows immediately from the following.

\begin{fct}
  \label{fct:VapnikChervonenkisLemma}
  Let $F$ be a finite set, $n = |F|$, and say
  $\cD \subseteq \cP(F)$ is such that $|\cD| > p_d(n)$.
  Then $F$ admits a subset $E \subseteq F$, $|E| = d$ such that
  $|\cD \cap E| = 2^d$.
\end{fct}

See \cite[Chapter~5]{vandenDries:oMinimal} for the proof, which
is attributed independently to
Shelah \cite{Shelah:1971-StabilityAndFCP}
and to Vapnik and Chervonenkis
\cite{Vapnik-Chervonenkis:UniformConvergence}.
This will also follow as a special case of a result we prove below.

Let us now add a minor twist to the setting, whose motivation will become
clear later on.
We allow the class $\cC$ to contain \emph{fuzzy subsets} of $X$,
i.e., objects $C$ such that for each $x \in X$ at most one of
$x \in C$ or $x \notin C$ holds, but possibly neither (in which case it is
not known whether $x$ belongs to $C$ or not).
This can be formalised by pair $C = (C_1,C_2)$ where $C_1, C_2 \subseteq X$
are disjoint, $C_1 = \{x \in X\colon x \in C\}$, $C_2 = \{x \in X\colon x \notin C\}$.

Let $C \sqsubseteq X$ denote that $C$ is a fuzzy subset of
$X$ and let $\sfP(X)$ denote the collection of fuzzy subsets.
If $F \subseteq X$, we say that $C$ \emph{determines a subset of $F$} if for
all $x \in F$ one of $x \in C$ or $x \notin C$ does hold, in which case
we define $C\cap F$ as usual, and otherwise we define $F\cap C = *$.
We may then define
\begin{gather*}
  \cC \cap F = \{C\cap F\colon C \in \cC\} \setminus \{*\}, \\
  f_\cC(n) = \max \{|\cC \cap F|\colon F \in [X]^n\}.
\end{gather*}
Thus $\cC \cap F$ is the collection of all subsets of $F$ which members
of $\cC$ determine.
Vapnik-Chervonenkis classes of fuzzy subsets of $X$ and the
corresponding index are defined as above, and the standard
proofs of \fref{fct:VapnikChervonenkisIndexBound}
and of \fref{fct:VapnikChervonenkisLemma} hold verbatim.

Our source for classes of fuzzy subsets of $X$ will be the following.
Let $Q \subseteq [0,1]^X$ be a collection of functions from $X$ to $[0,1]$.
For $0 \leq r < s \leq 1$ and $q \in Q$ we define a fuzzy set
$q_{r,s} \sqsubseteq X$ as follows: $x \in q_{r,s}$ if $q(x) \geq s$,
$x \notin q_{r,s}$ if $q(x) \leq r$, and it is unknown whether $x$ belongs to
$q_{r,s}$ or not if $r < q(x) < s$.
We define $Q_{r,s} = \{q_{r,s} \colon q \in Q\} \subseteq \sfP(X)$.
We say that $Q$ is a Vapnik-Chervonenkis class if $Q_{r,s}$ is for
every $0 \leq r < s \leq 1$.
Of course the index may vary with $r,s$.
However an easy argument shows that if $Q$ is a Vapnik-Chervonenkis
class then for every $\varepsilon > 0$ there exists $d(\varepsilon) < \omega$ which is an upper
bound for the Vapnik-Chervonenkis indexes of the classes
$Q_{r,r+\varepsilon}$ as $r$ varies in $[0,1-\varepsilon]$.
Notice that in the original case where $\cC \subseteq \cP(X)$,
if $Q = \{\chi_C\colon C \in \cC\}$ is the collection of characteristic
functions of members of $\cC$ then
$Q_{r,s} = \cC$ for every $0 \leq r < s \leq 1$, so the subset case is a
special case of the function case.

\begin{dfn}
  Let $0 \leq r_i < s_i \leq 1$ be given for $i < n$ and let
  $A \subseteq [0,1]^n$.
  We say that $A$ \emph{determines} a subset $w \subseteq n$ between
  $\bar r$ and $\bar s$ if there is a point $\bar a \in A$ such that
  $i \in w \Longrightarrow a_i \geq s_i$ and $i \notin w \Longrightarrow a_i \leq r_i$ for all $i < n$.
  In case $r_i = r$ and $s_i = s$ for all $i < n$ we say that
  $A$ determines $w$ between $r$ and $s$.

  We say that $A$ \emph{determines a $d$-dimensional $\varepsilon$-box}
  If $\varepsilon > 0$, $d \leq n$, and there are
  $i_0 < \ldots < i_{d-1} < n$ and
  $\bar r \in [0,1-\varepsilon]^d$ such that $\pi_{\bar i}(A) \subseteq [0,1]^d$ determines
  every subset of $d$ between $\bar r$ and $\bar r + \varepsilon$.

  Finally for $\varepsilon \geq 0$ we say that
  $A$ \emph{determines a strict $d$-dimensional $\varepsilon$-box}
  if it determines a $d$-dimensional $\varepsilon'$-box for some $\varepsilon' > \varepsilon$.
\end{dfn}

Thus if $F = \{x_1,\ldots,x_n\} \subseteq X$ then
$Q_{r,s} \cap F$ is in bijection with the subsets of $n$ determined by
$Q(\bar x)$ between $r$ and $s$.

Let us now relate this to the previous section.
Let again $Q \subseteq [0,1]^X$ be a collection of functions.
For a tuple $\bar x \in X^n$ and $q \in Q$ define:
\begin{gather*}
  q(\bar x) = (q(x_0),\ldots,q(x_n)) \in [0,1]^n, \\
  Q(\bar x) = (q(\bar x)\colon q \in Q), \\
  g_Q(n) = \sup \{ w_G(Q(\bar x))\colon \bar x \in X^n \}.
\end{gather*}

\begin{lem}
  \label{lem:DetermineImpliesWide}
  If $A \subseteq [0,1]^n$ determines an $n$-dimensional
  $\varepsilon$-box then $w_G(A) \geq \varepsilon n\sqrt{\frac{2}{\pi}}$.
  If $A$ determines a strict $n$-dimensional
  $\varepsilon$-box then $w_G(A) > \varepsilon n\sqrt{\frac{2}{\pi}}$.
\end{lem}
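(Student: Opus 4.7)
The plan is to unwind the definition of an $n$-dimensional $\varepsilon$-box and estimate the support function $h_A$ directly against a standard Gaussian vector, then take expectations.

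First I would write $w_G(A) = 2\bE[h_A(G)]$ where $G = (g_0,\ldots,g_{n-1}) \sim N(0,I_n)$, and fix $\bar r \in [0,1-\varepsilon]^n$ witnessing the $\varepsilon$-box, so that for every $w \subseteq n$ there exists $\bar a^w \in A$ with $a^w_i \geq r_i + \varepsilon$ when $i \in w$ and $a^w_i \leq r_i$ when $i \notin w$. The key idea is to make $w$ depend on the realisation of $G$: for each outcome, let $w(G) = \{i < n : g_i \geq 0\}$ and bound $h_A(G) \geq \langle \bar a^{w(G)}, G\rangle$.

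Next, I would compute this inner product coordinate by coordinate. For $i \in w(G)$ one has $g_i \geq 0$ and $a^{w(G)}_i \geq r_i + \varepsilon$, so $a^{w(G)}_i g_i \geq (r_i + \varepsilon) g_i = r_i g_i + \varepsilon g_i^+$; for $i \notin w(G)$ one has $g_i < 0$ and $a^{w(G)}_i \leq r_i$, and multiplying by the negative $g_i$ reverses the inequality to give $a^{w(G)}_i g_i \geq r_i g_i$, with $\varepsilon g_i^+ = 0$ added harmlessly. Summing yields
\begin{gather*}
  h_A(G) \geq \sum_{i<n} r_i g_i + \varepsilon \sum_{i<n} g_i^+.
\end{gather*}
Taking expectations, the first term vanishes since $\bE[g_i] = 0$, while $\bE[g_i^+] = \frac{1}{\sqrt{2\pi}}$, so $\bE[h_A(G)] \geq \varepsilon n / \sqrt{2\pi}$, giving $w_G(A) \geq \varepsilon n \sqrt{2/\pi}$ as required. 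For the strict case, applying the same inequality with any $\varepsilon' > \varepsilon$ witnessing the strict $\varepsilon$-box gives $w_G(A) \geq \varepsilon' n \sqrt{2/\pi} > \varepsilon n \sqrt{2/\pi}$.

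There is no real obstacle: the only non-routine step is recognising that the choice of witness $\bar a^w$ should be coupled to the signs of $G$, which is precisely the mechanism that turns the combinatorial shattering data into a width lower bound matching the cube calculation \fref{eq:CubeMeanWidth}. One could equivalently phrase this as a sort of ``translation to $\bar r = 0$ plus vertex-by-vertex domination'' argument, but the direct integral computation above seems cleanest and parallels the evaluation of $w_G([0,\varepsilon]^n)$ carried out in \fref{sec:MeanWidth}.
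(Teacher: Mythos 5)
Your proof is correct and is essentially the paper's argument: the paper deduces the bound by observing that (the convex hull of) the points $a_w$ contains a translate $\bar r + [0,\varepsilon]^n$ of the cube and then invokes monotonicity of $w_G$ together with \fref{eq:CubeMeanWidth}, and your coupling of the witness $\bar a^{w(G)}$ to the signs of $G$ is exactly the support-function computation underlying that containment. If anything, your version is slightly more explicit, since the paper's displayed chain ``$A \supseteq (a_w)_w \supseteq \prod[r_i,r_i+\varepsilon]$'' only holds at the level of support functions (equivalently, convex hulls), which is precisely what your pointwise inequality $h_A(G) \geq \sum_i r_i g_i + \varepsilon\sum_i g_i^+$ makes precise.
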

\begin{proof}
  It suffices to prove the first assertion.
  In this case there are
  $\bar r \in [0,1-\varepsilon]^n$ and for every
  $w \subseteq n$ there is $a_w \in A$ such that
  $a_w(i) \geq r + \varepsilon$ if $i \in w$ and $a_w(i) \leq r$ otherwise.
  Thus $A \supseteq (a_w\colon w \subseteq n) \supseteq \prod [r_i,r_i+\varepsilon] = \bar r_i + [0,\varepsilon]^n$.
  It follows that
  \begin{gather*}
    w_G(A) \geq w_G([0,\varepsilon]^n) = \varepsilon n\sqrt{\frac{2}{\pi}}.
    \qedhere
  \end{gather*}
\end{proof}
In other words, if $A$ determines an $n$-dimensional $\varepsilon$-box then
$\Conv(A)$ contains a set of the form
$\bar r + [0,\varepsilon]^n$.
The converse does not hold in general.

\begin{prp}
  \label{prp:MeanWidthImpliesVC}
  If $Q \subseteq [0,1]^X$ and $\lim \frac{g_Q(n)}{n} = 0$ then
  $Q$ is a Vapnik-Chervonenkis class.

  Moreover, for any function $g(n)$ such that
  $\lim \frac{g(n)}{n} = 0$ and any $\varepsilon > 0$ there is
  $d(g,\varepsilon) < \omega$ such that for any $Q \subseteq [0,1]^X$, if
  $g_Q \leq g$ then $d(g,\varepsilon) \geq VC(Q_{r,r+\varepsilon})$ for all $0 \leq r \leq 1-\varepsilon$.
\end{prp}
\begin{proof}
  Let $g = g_Q$ and $\varepsilon > 0$ be given, and find $n$ such that
  $\frac{g(n)}{n} < \varepsilon\sqrt{\frac{2}{\pi}}$.
  We claim that $d(g,\varepsilon) = n$ will do.

  Indeed, assume not.
  Then there are $r \in [0,1-\varepsilon]$ and $F = \{x_0,\ldots,x_{n-1}\} \subseteq X$
  such that $|Q_{r,r+\varepsilon} \cap F| = 2^n$, i.e., such that
  $Q(\bar x)$ determines an $n$-dimensional $\varepsilon$-box.
  By \fref{lem:DetermineImpliesWide} we
  have $g(n) \geq g_Q(n) \geq w_G(Q(\bar x)) \geq \varepsilon n\sqrt{\frac{2}{\pi}}$,
  a contradiction.
\end{proof}

For the converse a little more work is required.
Let $\pi\colon \setR^n \to \setR^{n-1}$ be the projection on the first $n-1$
coordinates.
For $A \subseteq \setR^n$ and $a \in \setR$ let
$A_{\leq a} = A \cap (\setR^{n-1} \times {]-}\infty,a])$,
$A_{> a} = A \cap (\setR^{n-1} \times {]}a,+\infty[)$.

Let $\lambda$ denote the Lebesgue measure.
\begin{lem}
  \label{lem:StepVCS}
  Let $A \subseteq [0,\ell+1]^n$ be a Borel set,
  $\lambda(A) > \ell^dp_d(n)$.
  Then at least one of the following holds:
  \begin{enumerate}
  \item $\lambda(\pi A) > \ell^dp_d(n-1)$.
  \item There is $a \in [0,\ell+1]$ such that
    $\lambda( \pi A_{\leq a} \cap \pi A_{> a+1}) > \ell^{d-1}p_{d-1}(n-1)$.
  \end{enumerate}
\end{lem}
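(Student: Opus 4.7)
The plan is to argue by contradiction, exploiting the Pascal-like identity
\begin{gather*}
  p_d(n) = p_d(n-1) + p_{d-1}(n-1),
\end{gather*}
which follows immediately from $\binom{n}{k} = \binom{n-1}{k} + \binom{n-1}{k-1}$.

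For $x \in \pi A$ I introduce the fibre $A_x = \{t \in [0,\ell+1] : (x,t) \in A\}$ and its \emph{diameter} $r_x = \sup A_x - \inf A_x \in [0,\ell+1]$. Since $A_x \subseteq [\inf A_x, \sup A_x]$, one has $\lambda_1(A_x) \leq r_x$, so Fubini gives $\lambda(A) \leq \int_{\pi A} r_x\, dx$. The key idea is the decomposition
\begin{gather*}
  r_x = \min(r_x, 1) + \max(0, r_x - 1),
\end{gather*}
whose two summands correspond exactly to the two alternatives of the lemma.

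Assuming both alternatives fail, I bound the first summand trivially: $\int_{\pi A} \min(r_x, 1)\, dx \leq \lambda(\pi A) \leq \ell^d p_d(n-1)$. For the second summand I observe that, for fixed $x \in \pi A$, the set of $a$ with $x \in \pi A_{\leq a} \cap \pi A_{>a+1}$ is, up to a null set, the interval $[\inf A_x, \sup A_x - 1)$, of Lebesgue measure $\max(0, r_x - 1)$; since $\sup A_x \leq \ell+1$ this interval lies in $[0,\ell]$, and indeed $A_{>a+1} = \emptyset$ for $a > \ell$ so no $a$ is lost by restricting. A second application of Fubini then gives
\begin{gather*}
  \int_{\pi A} \max(0, r_x - 1)\, dx = \int_0^\ell \lambda(\pi A_{\leq a} \cap \pi A_{>a+1})\, da \leq \ell \cdot \ell^{d-1} p_{d-1}(n-1) = \ell^d p_{d-1}(n-1).
\end{gather*}
Summing the two summand bounds yields $\lambda(A) \leq \ell^d\bigl(p_d(n-1) + p_{d-1}(n-1)\bigr) = \ell^d p_d(n)$, contradicting the hypothesis.

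The only real insight is the pairing of the two alternatives of the lemma with the two summands in the Pascal recursion; once this is spotted the argument is a routine double application of Fubini together with the combinatorial identity. I expect no substantive obstacle beyond verifying the fibre computation $\lambda_1(\{a \in [0,\ell] : \inf A_x \leq a < \sup A_x - 1\}) = \max(0, r_x - 1)$, which is straightforward once one notes that the boundary behaviour of $\inf$ and $\sup$ only affects a null set of values of $a$.
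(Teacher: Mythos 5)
Your argument is correct and is essentially the paper's own proof in contrapositive form: your pointwise bound $\lambda_1(A_x)\leq r_x=\min(r_x,1)+\max(0,r_x-1)$ is exactly the paper's inequality $f_A(x)\leq \chi_{\pi A}(x)+f_{A'}(x)$, where $A'=\{(x,a):x\in\pi A_{\leq a}\cap\pi A_{>a+1}\}$, and both proofs then integrate via Fubini and invoke the Pascal identity $p_d(n)=p_d(n-1)+p_{d-1}(n-1)$. No substantive difference.
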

\begin{proof}
  For $x\in [0,\ell+1]^{n-1}$ let
  \begin{gather*}
    A' = \{(x,a) \in [0,\ell+1]^{n-1}\times[0,\ell]\colon x \in \pi A_{\leq a} \cap \pi A_{> a+1}\} \\
    f_A(x) = \int \chi_A(x,y) \,dy,\qquad f_{A'}(x) = \int \chi_{A'}(x,y) \,dy.
  \end{gather*}
  Notice that $f_{A'}(x) + \chi_{\pi A}(x) \geq f_A(x)$, integrating which yields:
  \begin{gather*}
    \lambda(A') + \lambda(\pi(A)) \geq \lambda(A) > \ell^dp_d(n).
  \end{gather*}
  Recall that $p_d(n) = p_{d-1}(n-1) + p_d(n-1)$ and
  assume that the first case fails, i.e.,
  that $\lambda(\pi A) \leq \ell^dp_d(n-1)$.
  Then:
  \begin{gather*}
    \lambda(A') + \ell^dp_d(n-1) > \ell^dp_d(n) = \ell^dp_{d-1}(n-1) + \ell^dp_d(n-1)
    \intertext{whereby:}
    \lambda(A') > \ell p_{d-1}(n-1).
  \end{gather*}
  Then there is $a$ such that
  $\lambda(\{x\colon (x,a) \in A'\}) > p_{d-1}(n-1)$, which is precisely the second case.
\end{proof}

\begin{lem}
  \label{lem:VCS}
  Let $A \subseteq [0,\ell+1]^n$ be a Borel set, $\lambda(A) > \ell^dp_d(n)$.
  Then $A$ determines a strict $d$-dimensional $1$-box.
\end{lem}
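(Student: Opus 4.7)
The plan is to prove the lemma by induction on $n$, using \fref{lem:StepVCS} as the inductive engine. The base case $n = 0$ is trivial: necessarily $d = 0$, the hypothesis $\lambda(A) > p_0(0) = 0$ just says $A$ is non-empty, and the vacuous $0$-dimensional box is witnessed by any point of $A$.

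For the inductive step, given $A \subseteq [0,\ell+1]^n$ with $\lambda(A) > \ell^d p_d(n)$, I apply \fref{lem:StepVCS} and split on its two alternatives. In the first alternative, $\lambda(\pi A) > \ell^d p_d(n-1)$ and, provided $d \leq n-1$, the inductive hypothesis applied to $\pi A \subseteq [0,\ell+1]^{n-1}$ yields a strict $d$-dimensional $1$-box there, with coordinates $i_0 < \ldots < i_{d-1} < n-1$, gap $\varepsilon' > 1$ and base $\bar r$. Since every point of $\pi A$ lifts to a point of $A$, the same data witness such a box in $A$.

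The substantive case is the second alternative: fix $a \in [0,\ell]$ with $\lambda(B) > \ell^{d-1} p_{d-1}(n-1)$, where $B := \pi A_{\leq a} \cap \pi A_{>a+1}$. By induction $B$ determines a strict $(d-1)$-dimensional $1$-box with coordinates $i_0 < \ldots < i_{d-2} < n-1$, gap $\varepsilon'_0 > 1$, and base $\bar r_0$. For each $w \subseteq \{i_0,\ldots,i_{d-2}\}$ pick a witness $\bar b_w \in B$; by the very definition of $B$ there exist $\bar a_w^-,\bar a_w^+ \in A$ projecting to $\bar b_w$, whose last coordinates are $\leq a$ and $> a+1$ respectively. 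The $2^d$ points $\bar a_w^\pm$ then realise every pattern required for a $d$-dimensional box in $A$ on coordinates $(i_0,\ldots,i_{d-2},n-1)$ with base $(\bar r_0,a)$ and any uniform gap $\varepsilon' > 1$ satisfying $\varepsilon' \leq \varepsilon'_0$ and $\varepsilon' \leq \min_w \bigl(\text{last coordinate of }\bar a_w^+\bigr) - a$; since this minimum is taken over finitely many values each strictly exceeding $a+1$, it itself exceeds $1$, so such an $\varepsilon'$ exists.

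The one subtle point is the degenerate subcase $d = n$ of the first alternative, where the inductive hypothesis would require the impossible $d \leq n-1$. Under the natural assumption $\ell \geq 1$ this subcase is actually incompatible with the hypothesis, since $\lambda(\pi A) \leq (\ell+1)^{n-1} \leq (2\ell)^{n-1} \leq \ell^n \cdot 2^{n-1} = \ell^n p_n(n-1)$; hence \fref{lem:StepVCS} must deliver the second alternative and the induction closes. The real combinatorial substance lives in \fref{lem:StepVCS}; the proof of \fref{lem:VCS} itself is essentially careful bookkeeping of how to glue the new $n$-th coordinate onto the inductively obtained $(d-1)$-dimensional box.
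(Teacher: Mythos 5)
Your proof is correct and is precisely the induction on $n$ via \fref{lem:StepVCS} that the paper's one-line proof intends; the gluing in the second alternative (choosing, for each pattern $w$, lifts $\bar a_w^{\pm}$ of a witness in $\pi A_{\leq a}\cap\pi A_{>a+1}$ and adjoining coordinate $n-1$ with base point $a$) is exactly the bookkeeping the paper omits, and you have it right. Two remarks. First, your ``natural assumption'' $\ell\geq 1$ is not cosmetic: without it the lemma as stated is false. Take $n=d=1$, $\ell=\tfrac12$ and $A=(0.2,0.8)\subseteq[0,\tfrac32]$; then $\lambda(A)=0.6>\ell\,p_1(1)=\tfrac12$, yet $A$ has diameter less than $1$ and so determines no strict $1$-dimensional $1$-box. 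As your computation shows, it is exactly the subcase $d=n$ of the first alternative that cannot be excluded when $\ell<1$. This is consequential for the paper: \fref{lem:VCSBox} invokes the present lemma with $\ell=\frac{1-\varepsilon}{c+\varepsilon}$, which is $<1$ for the choice $c=1$ made in the proof of \fref{thm:VCEquiv}, so the statement (or its later use) needs adjusting. Second, a very minor point: besides $n=0$ you should also admit $d=0$ (with $n>0$) as a terminating case of the recursion --- a set of positive measure is nonempty and determines the vacuous $0$-dimensional box --- since \fref{lem:StepVCS} cannot be applied with $d=0$ (it would involve $p_{-1}$).
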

\begin{proof}
  Follows immediately by induction on $n$ using the previous Lemma
  for the induction step.
\end{proof}

\begin{lem}
  \label{lem:VCSBox}
  Let $A \subseteq [0,1]^n$ and $c > 0$, $\varepsilon \geq 0$ be such that
  \begin{gather*}
    \lambda(A + [0,c]^n) > ( c+\varepsilon )^{n-d} (1-\varepsilon)^d p_d(n).
  \end{gather*}
  Then $A$ determines a strict $d$-dimensional $\varepsilon$-box.
\end{lem}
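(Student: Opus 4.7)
The plan is to reduce \fref{lem:VCSBox} to \fref{lem:VCS} via a single linear rescaling, and then to observe that a strict $(c+\varepsilon)$-box in the Minkowski sum $A+[0,c]^n$ automatically yields a strict $\varepsilon$-box in $A$ itself.

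First, I would rescale by $\beta = 1/(c+\varepsilon)$. Let $B = \beta(A+[0,c]^n)$ and set $\ell = (1-\varepsilon)/(c+\varepsilon)$. A direct check gives $\ell+1 = (1+c)/(c+\varepsilon)$, so $B \subseteq [0,\ell+1]^n$, and
$$
\lambda(B) = (c+\varepsilon)^{-n}\lambda(A+[0,c]^n) > (c+\varepsilon)^{-d}(1-\varepsilon)^d p_d(n) = \ell^d p_d(n),
$$
precisely the threshold demanded by \fref{lem:VCS}. Applying that lemma to $B$ provides indices $i_0<\dots<i_{d-1}$, a vector $\bar s \in [0,\ell]^d$ and some $\varepsilon' > 1$ such that for every $w \subseteq d$ there is $\bar b_w \in B$ with $\pi_{\bar i}(\bar b_w)_j \geq s_j+\varepsilon'$ for $j\in w$ and $\pi_{\bar i}(\bar b_w)_j \leq s_j$ otherwise.

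Next, I would unscale. Write $\beta^{-1}\bar b_w = \bar a_w + \bar d_w$ with $\bar a_w \in A$ and $\bar d_w \in [0,c]^n$, and set $r_j = (c+\varepsilon)s_j$ together with $\eta = (c+\varepsilon)\varepsilon' - c$. Since $\varepsilon' > 1$ one has $\eta > \varepsilon$. For $j \in w$,
$$
a_{w,i_j} \geq (c+\varepsilon)(s_j+\varepsilon') - c = r_j + \eta,
$$
while for $j \notin w$,
$$
a_{w,i_j} \leq (c+\varepsilon)s_j - d_{w,i_j} \leq r_j.
$$
The strict bound $s_j \leq \ell+1-\varepsilon'$ available from the strict $\varepsilon'$-box gives $r_j \leq 1-\eta$, so $\bar r \in [0,1-\eta]^d$ is admissible. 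Because $\eta > \varepsilon$, this exhibits a strict $d$-dimensional $\varepsilon$-box in $A$ as desired.

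There is no genuine obstacle beyond a bit of bookkeeping; the one non-obvious point is the scaling factor $1/(c+\varepsilon)$, which is dictated by two constraints that happen to coincide: it makes the ambient cube for $B$ have edge $\ell+1 = (1+c)/(c+\varepsilon)$, and it converts the hypothesis $\lambda(A+[0,c]^n) > (c+\varepsilon)^{n-d}(1-\varepsilon)^d p_d(n)$ into the measure condition $\lambda(B) > \ell^d p_d(n)$ required by \fref{lem:VCS}.
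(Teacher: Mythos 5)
Your proof is correct and follows exactly the paper's route: rescale $A+[0,c]^n$ by $1/(c+\varepsilon)$ so that \fref{lem:VCS} applies, then undo the scaling. The only difference is that you spell out the final bookkeeping (passing from a strict $(c+\varepsilon)$-box of $A+[0,c]^n$ to a strict $\varepsilon$-box of $A$), which the paper leaves implicit.
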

\begin{proof}
  Let $\ell = \frac{1-\varepsilon}{c+\varepsilon}$,
  so $\ell+1 = \frac{1+c}{c+\varepsilon}$.
  Let $B = (c+\varepsilon)^{-1} (A + [0,c]^n)$.
  Then $B \subseteq [0,\ell+1]^n$ and $\lambda(B) > \ell^dp_d(n)$.
  By \fref{lem:VCS} $B$ determines a strict
  $d$-dimensional $1$-box.
  Thus $A+[0,c]^n$ determines a strict $d$-dimensional
  $(c+\varepsilon)$-box, and
  $A$ determines a strict $d$-dimensional $\varepsilon$-box.
\end{proof}

In case $A \subseteq \{0,1\}^n$ then
$\lambda(A+[0,1]^n) = |A|$.
If in addition $|A| > p_d(n) =  (1+0)^{n-d}(1-0)^dp_d(n)$
then $A$ determines a (strict) $d$-dimensional $0$-box, i.e., an $\varepsilon$-box
for some arbitrarily small $\varepsilon > 0$.
But given that $A \subseteq \{0,1\}^d$ this is only possible if $A$ determines a
$d$-dimensional $1$-box.
Thus \fref{fct:VapnikChervonenkisLemma} follows as a special case of
\fref{lem:VCSBox}.

Now let us show that if
$\lambda(A+[0,c]^n)$ is small then $A$ is small in a different way, namely
has small Gaussian mean width.

\begin{lem}
  \label{lem:GaussianMeanWidthMeasureBound}
  Let $A \subseteq [0,1]^n$, $c > 0$.
  Then
  \begin{gather*}
    w_G(A) \leq (1+c)\sqrt{2n\log(\lambda(A+[0,c]^n)/c^n)}.
  \end{gather*}
\end{lem}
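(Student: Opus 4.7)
The plan is to combine an exponential-moment (Chernoff) bound with a smoothing-by-integration trick that converts the supremum defining $h_A$ into an integral over $A + [-c/2, c/2]^n$.

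Since $w_G$ is translation-invariant (as $\bE[G_n] = 0$ makes $\bE[h_{A+v}(G_n)] = \bE[h_A(G_n)]$ for any $v \in \setR^n$), I may assume that $A \subseteq [-1/2,1/2]^n$, so that $A + [-c/2, c/2]^n \subseteq [-(1+c)/2, (1+c)/2]^n$ and still has Lebesgue measure $V := \lambda(A + [0,c]^n)$. The key elementary step is Jensen's inequality applied to the convex function $t \mapsto e^{s t G_i}$ on the symmetric interval $[-c/2, c/2]$: since the midpoint is $0$, one has $\int_{-c/2}^{c/2} e^{s t G_i}\,dt \geq c$. Multiplying over $i < n$ and translating gives, for every $a \in A$,
\[
  \int_{a + [-c/2,c/2]^n} e^{s \langle x, G_n\rangle}\,dx \geq c^n e^{s \langle a, G_n\rangle},
\]
whence, since the cubes $a + [-c/2,c/2]^n$ all sit inside $A+[-c/2,c/2]^n$,
\[
  e^{s\, h_A(G_n)} \;=\; \sup_{a \in A} e^{s\langle a, G_n\rangle} \;\leq\; c^{-n}\int_{A+[-c/2,c/2]^n} e^{s\langle x, G_n\rangle}\,dx.
\]

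Taking expectations and swapping with the integral by Fubini, I use $\bE[e^{s\langle x, G_n\rangle}] = e^{s^2 \|x\|_2^2/2}$ together with the bound $\|x\|_2 \leq (1+c)\sqrt{n}/2$ valid throughout the integration domain to obtain $\bE[e^{s h_A(G_n)}] \leq c^{-n} V \exp(s^2 (1+c)^2 n / 8)$. A second application of Jensen's inequality, this time in the form $e^{s \bE X} \leq \bE[e^{s X}]$ with $X = h_A(G_n)$, rearranges into
\[
  s\, \bE[h_A(G_n)] \;\leq\; L + \tfrac{s^2 (1+c)^2 n}{8}, \qquad L := \log(V/c^n).
\]

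Optimising over $s > 0$ at $s = \sqrt{8L/[(1+c)^2 n]}$ yields $\bE[h_A(G_n)] \leq (1+c)\sqrt{nL/2}$, and doubling gives $w_G(A) \leq (1+c)\sqrt{2nL}$, as claimed. The whole subtlety is centering: symmetrising the cube to $[-c/2,c/2]^n$ is what makes the first Jensen step produce the clean lower bound $c^n$ (rather than an expression still involving $G_n$), and centering $A$ inside $[-1/2,1/2]^n$ replaces the crude bound $\|x\|_2 \leq (1+c)\sqrt{n}$ by $(1+c)\sqrt{n}/2$ — exactly the factor needed to recover $(1+c)$, rather than $2(1+c)$, in the final estimate. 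The degenerate case $V \leq c^n$ forces $A$ to reduce to a single point, where both sides vanish, so the inequality is trivial there.
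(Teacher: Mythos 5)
Your proof is correct and follows essentially the same route as the paper's: the convexity/Jensen smoothing of $e^{s\langle\cdot,G_n\rangle}$ over a small symmetric cube to replace the supremum by an integral over the inflated set, the Gaussian moment bound $\bE[e^{s\langle x,G_n\rangle}]=e^{s^2\|x\|^2/2}$, a second Jensen step to pass to $\bE[h_A(G_n)]$, and optimisation over $s$. The only (cosmetic) difference is that you centre $A$ and the cube by translation, where the paper rescales affinely to $A'=2A-1\subseteq[-1,1]^n$ and divides by $2$ at the end.
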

\begin{proof}
  Let us first observe that if $\varphi\colon\setR^n \to \setR$ is a linear functional then
  the convexity of the exponential function implies:
  \begin{align*}
    \exp(\varphi(x))
    & = \exp\left(
      \int_{y\in x+[-c,c]^n} \varphi(y)\,(2c)^{-n}d\lambda(y)
    \right) \\
    & \leq (2c)^{-n} \int_{x+[-c,c]^n}\exp( \varphi(y))\, d\lambda(y).
  \end{align*}
  Let $A' = 2A-1 \subseteq [-1,1]^n$
  and $B' = A' + [-c,c]^n = 2(A+[0,c]^n) - (1+c)$.
  Then by the previous observation we have:
  \begin{align*}
    \sup\{ \exp(\varphi(x))\colon x \in A' \}
    & \leq (2c)^{-n} \int_{B'}\exp(\varphi(x))\, d\lambda(x).
  \end{align*}
  Let $\beta > 0$ be an arbitrary parameter for the time being.
  For a fixed $x \in \setR^n$ we have $\beta\langle x,G_n\rangle \sim N(0,\beta^2\|x\|^2)$, and a
  straightforward calculation yields
  $\bE[\exp( \beta\langle x,G_n\rangle ) ] = \exp( \beta^2\|x\|^2/2 )$.
  Using concavity of the logarithm we obtain:
  \begin{align*}
    w_G(A')
    & = 2\bE\left[ \sup\left\{ \langle x,G_n\rangle \colon x \in A' \right\} \right] \\
    & = \frac{2}{\beta}\bE\left[ \log\left(
        \sup\left\{ \exp(\beta\langle x,G_n\rangle) \colon x \in A' \right\}
      \right) \right] \\
    & \leq \frac{2}{\beta}\log\left( \bE\left[
        (2c)^{-n} \int_{B'}\exp(\beta\langle x,G_n\rangle)\, d\lambda(x)
      \right] \right) \\
    & = \frac{2}{\beta}\log\left( (2c)^{-n} \int_{B'}
      \bE\left[ \exp(\beta\langle x,G_n\rangle) \right]\, d\lambda(x)
      \right) \\
    & = \frac{2}{\beta}\log\left( (2c)^{-n} \int_{B'}
      \exp\left( \frac{\beta^2\|x\|^2}{2} \right)\, d\lambda(x)
    \right) \\
    & \leq \frac{2}{\beta}\log\left(
      (2c)^{-n} \lambda(B') \exp\left( \frac{\beta^2(1+c)^2n}{2} \right)
    \right) \\
    & = \frac{2\log(\lambda(B')/(2c)^n)}{\beta}
    + \beta(1+c)^2n.
  \end{align*}
  Minimum is attained when
  $\beta = \frac{\sqrt{2\log(\lambda(B')/(2c)^n)}}{(1+c)\sqrt{n}}$, and substituting
  we obtain:
  \begin{gather*}
    w_G(A') \leq 2(1+c)\sqrt{2n\log(\lambda(B')/(2c)^n)}.
  \end{gather*}
  Finally,
  $w_G(A) = w_G(A')/2$ and $\lambda(A+[0,c]^n) = \lambda(B')/2^n$, whence the
  desired inequality.
\end{proof}

\begin{lem}
  \label{lem:GaussianMeanWidthSizeBound}
  Let $A \subseteq [0,1]^n$ be finite, $|A| = N$.
  Then $w_G(A) \leq \sqrt{2n\log N}$.
\end{lem}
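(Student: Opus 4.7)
The plan is to deduce this from \fref{lem:GaussianMeanWidthMeasureBound} by letting the inflation parameter $c$ tend to zero. Since $A$ is a union of $N$ points, for any $c>0$ the set $A+[0,c]^n$ is a union of $N$ translates of the cube $[0,c]^n$, so
\begin{gather*}
  \lambda(A+[0,c]^n) \leq Nc^n,
\end{gather*}
and consequently $\lambda(A+[0,c]^n)/c^n \leq N$. Substituting into the inequality of \fref{lem:GaussianMeanWidthMeasureBound} gives
\begin{gather*}
  w_G(A) \leq (1+c)\sqrt{2n\log N}
\end{gather*}
for every $c>0$. Passing to the limit $c \to 0^+$ yields the desired bound.

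Alternatively, one may reprove \fref{lem:GaussianMeanWidthMeasureBound} directly for the finite case by replacing the integral bound with a finite sum. In this version one starts from $A' = 2A-1 \subseteq [-1,1]^n$, uses Jensen's inequality exactly as in the preceding proof to write
\begin{gather*}
  w_G(A') \leq \frac{2}{\beta}\log\bE\left[\sum_{x \in A'} \exp(\beta\langle x,G_n\rangle)\right]
  = \frac{2}{\beta}\log\sum_{x \in A'} \exp(\beta^2\|x\|^2/2),
\end{gather*}
then bounds $\|x\|^2 \leq n$ for $x \in A'$, and optimises over $\beta$ with $\beta = \sqrt{2\log N/n}$ to obtain $w_G(A') \leq 2\sqrt{2n\log N}$, whence $w_G(A) = w_G(A')/2 \leq \sqrt{2n\log N}$.

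Neither route presents any real obstacle; the first is preferable since the work has already been done in \fref{lem:GaussianMeanWidthMeasureBound}, and all that remains is the elementary volume estimate for $N$ translated cubes and a limit. I would present the argument as the one-line application of the previous lemma plus the observation about $\lambda(A+[0,c]^n)$.
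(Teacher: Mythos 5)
Your first route is exactly the paper's proof: apply Lemma \ref{lem:GaussianMeanWidthMeasureBound} with the observation that $\lambda(A+[0,c]^n) \leq Nc^n$ (the paper phrases it as equality for small $c$, but your inequality version is equally valid and even slightly cleaner) and let $c \to 0^+$. The proposal is correct and takes essentially the same approach as the paper.
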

\begin{proof}
  For $c$ small enough we have $\lambda(A+[0,c]^n) = Nc^n$, so
  $w_G(A) \leq (1+c)\sqrt{2n\log N}$ and
  thus $w_G(A) \leq \sqrt{2n\log N}$.
\end{proof}
Our proof of \fref{lem:GaussianMeanWidthMeasureBound} is based on a
direct argument due to M.\ Talagrand for
\fref{lem:GaussianMeanWidthSizeBound}.

\begin{thm}
  \label{thm:VCEquiv}
  Let $Q \subseteq [0,1]^X$ be a collection of functions.
  Then the following are equivalent:
  \begin{enumerate}
  \item \label{item:VCEquivDiagonal}
    $Q$ is a Vapnik-Chervonenkis class.
  \item \label{item:VCEquivNonDiagonal}
    For every $\varepsilon > 0$ there is $d$
    such that for every $\bar x \in X^d$,
    $Q(\bar x)$ does \emph{not} determine a $d$-dimensional $\varepsilon$-box.
  \item \label{item:VCEquivWidth}
    $\lim \frac{g_Q(n)}{n}  = 0$.
  \end{enumerate}
\end{thm}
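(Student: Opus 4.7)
The plan is to prove the cycle \fref{item:VCEquivWidth} $\Longrightarrow$ \fref{item:VCEquivDiagonal} $\Longrightarrow$ \fref{item:VCEquivNonDiagonal} $\Longrightarrow$ \fref{item:VCEquivWidth}. The first arrow is exactly \fref{prp:MeanWidthImpliesVC}, so nothing remains to do for it.

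For \fref{item:VCEquivDiagonal} $\Longrightarrow$ \fref{item:VCEquivNonDiagonal} I would argue by contrapositive. Assume some $\varepsilon > 0$ is bad, so that for every $d$ there exist $\bar x^{(d)} \in X^d$ and a parameter $\bar r^{(d)} \in [0,1-\varepsilon]^d$ witnessing that $Q(\bar x^{(d)})$ determines a $d$-dimensional $\varepsilon$-box. Partition $[0,1-\varepsilon]$ into $N = \lceil 2/\varepsilon \rceil$ subintervals of length at most $\varepsilon/2$. By pigeonhole at least $d/N$ entries of $\bar r^{(d)}$ fall in a common subinterval; a second pigeonhole over these $N$ choices then fixes a single subinterval $[r_\ast, r_\ast + \varepsilon/2]$ that does so for arbitrarily large $d$. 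Restricting to the corresponding coordinates of $\bar x^{(d)}$ exhibits sets of unbounded size shattered by the classical class $Q_{r_\ast + \varepsilon/2,\, r_\ast + \varepsilon}$, contradicting \fref{item:VCEquivDiagonal}.

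For \fref{item:VCEquivNonDiagonal} $\Longrightarrow$ \fref{item:VCEquivWidth}, fix $\delta > 0$ and aim to show $\limsup_n g_Q(n)/n \leq \delta$. Choose $\varepsilon > 0$ small (to be specified) and take $d = d(\varepsilon)$ from \fref{item:VCEquivNonDiagonal}. For $n \geq d$ and any $\bar x \in X^n$, were $Q(\bar x)$ to determine a $d$-dimensional $\varepsilon$-box one could pass to the corresponding $d$-subtuple of $\bar x$ and contradict \fref{item:VCEquivNonDiagonal}. Hence $Q(\bar x)$ does not determine a strict $d$-dimensional $\varepsilon$-box either, so the contrapositive of \fref{lem:VCSBox} gives, for every $c > 0$,
\begin{gather*}
  \lambda\bigl(Q(\bar x) + [0,c]^n\bigr) \leq (c+\varepsilon)^{n-d}(1-\varepsilon)^d\, p_d(n).
\end{gather*}
Feeding this into \fref{lem:GaussianMeanWidthMeasureBound} and noting that the logarithm therein grows as $n\log(1+\varepsilon/c) + O(\log n)$ (since $p_d$ is polynomial in $n$ and $d$ is fixed once $\varepsilon$ is), one arrives at
\begin{gather*}
  \limsup_{n\to\infty} \frac{g_Q(n)}{n} \leq (1+c)\sqrt{2\log(1+\varepsilon/c)}.
\end{gather*}
Taking for instance $c = 1$ and then choosing $\varepsilon$ so small that $2\sqrt{2\log(1+\varepsilon)} < \delta$ finishes the argument, since $\delta$ was arbitrary and $g_Q(n) \geq 0$.

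The main obstacle is the last implication. One must juggle the three independent parameters $\varepsilon, c, d$ together with the running index $n$: keep $d = d(\varepsilon)$ fixed while $n \to \infty$ to extract the $n\log(1+\varepsilon/c)$ term from the measure bound, and only afterwards send $\varepsilon$ to zero with $c$ held constant so that the residual factor $(1+c)\sqrt{2\log(1+\varepsilon/c)}$ becomes arbitrarily small. The middle implication is a routine pigeonhole, and the first is already established.
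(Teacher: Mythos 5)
Your proposal is correct and follows essentially the same route as the paper: the same three-arrow cycle, the same pigeonhole contrapositive for \fref{item:VCEquivDiagonal} $\Longrightarrow$ \fref{item:VCEquivNonDiagonal} (your second pigeonhole to fix a single subinterval is a slightly more careful rendering of what the paper leaves implicit), and the same combination of \fref{lem:VCSBox} with \fref{lem:GaussianMeanWidthMeasureBound} for the last implication, where the paper merely simplifies your bound via $\log(1+\varepsilon/c)\leq \varepsilon/c$ before optimising at $c=1$.
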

\begin{proof}
  For \fref{item:VCEquivDiagonal} $\Longrightarrow$ \fref{item:VCEquivNonDiagonal}
  we shall prove the contra-positive.
  So assume that for some $\varepsilon> 0$ this fails, i.e., for all $d$ there
  are $\bar x \in X^d$, $r_0,\ldots,r_{d-1}$ and
  $\big\{q_w\colon w \subseteq d\big\} \subseteq Q$ satisfying
  $q_w(x_i) \leq r_i$ if $i \in w$ and
  $q_w(x_i) \geq r_i+\varepsilon$ if $i \notin w$.
  Thus there must be a subset of $r_i$ of size at least
  $d' = \lceil d\varepsilon/2 \rceil$ which are at distance at most $\varepsilon/2$ from one
  another, and we might as well assume these are
  $r_0 \leq r_1 \leq \ldots \leq r_{d'-1} \leq r_0 + \varepsilon/2 = r$.
  For $i < d'$ we $q_w(x_i) \leq r$ if $i \in w$ and
  $q_w(x_i) \geq r+\varepsilon/2$ if $i \notin w$.
  This works for arbitrarily large $d$, and thus for arbitrarily large
  $d'$.
  Thus $Q$ is not a Vapnik-Chervonenkis class.
  (And considering $d' = \lfloor d\varepsilon/m \rfloor$ we can get
  $q_w(x_i) \geq r+\varepsilon(1-\frac{1}{m})$.)

  Let us now show \fref{item:VCEquivNonDiagonal}
  $\Longrightarrow$ \fref{item:VCEquivWidth}.
  Let us fix $\varepsilon > 0$, and let $d$ be as in the hypothesis.
  By \fref{lem:VCSBox} and \fref{lem:GaussianMeanWidthMeasureBound}
  we have for all $c > 0$ and $\bar x \in X^n$:
  \begin{gather*}
    \lambda(Q(\bar x) + [0,c]^n) \leq ( c+\varepsilon )^{n-d} (1-\varepsilon)^d p_d(n), \\
    w_G(Q(\bar x)) \leq (1+c)\sqrt{2n\log(\lambda(Q(\bar x)+[0,c]^n)/c^n)}
  \end{gather*}
  Whereby:
  \begin{gather*}
    g_Q(n) \leq (1+c)\sqrt{2n\log\left(
        \left( 1+\frac{\varepsilon}{c} \right)^n
        \left( \frac{1-\varepsilon}{c+\varepsilon} \right)^d p_d(n)
      \right)}, \\
    \frac{g_Q(n)}{n}
    \leq (1+c)\sqrt{2\log\left( 1+\frac{\varepsilon}{c} \right)
      + \frac{1}{n} \log\left(
        \left( \frac{1-\varepsilon}{c+\varepsilon} \right)^d p_d(n)
      \right)}.
  \end{gather*}
  As $n$ goes to infinity the second term under the root disappears.
  In addition we have
  $\log(1+\varepsilon/c) \leq \varepsilon/c$, and we obtain:
  $\varlimsup \frac{g_Q(n)}{n} \leq (1+c)\sqrt{\frac{2\varepsilon}{c}}$.
  Minimum is reached when $c = 1$ in which case
  $\varlimsup \frac{g_Q(n)}{n} \leq \sqrt{8\varepsilon}$.
  This holds for every $\varepsilon > 0$, whereby
  $\lim \frac{g_Q(n)}{n} = 0$ as desired.

  \fref{item:VCEquivWidth} $\Longrightarrow$ \fref{item:VCEquivDiagonal}
  was proved in \fref{prp:MeanWidthImpliesVC}.
\end{proof}

Notice that the proof also tells us in fact something more precise:
\begin{cor}
  Assume that $\varlimsup \frac{g_Q(n)}{n} = C > 0$.
  Then for some $r$ the class
  $Q_{r,r+C^2/8}$ is \emph{not} a Vapnik-Chervonenkis class.
\end{cor}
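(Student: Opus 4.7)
The plan is to extract a quantitative refinement from the proof of Theorem~\ref{thm:VCEquiv}. The implication (ii) $\Rightarrow$ (iii) established that under the hypothesis ``$Q$ admits no $d$-dimensional $\varepsilon$-box for some finite $d$,'' one obtains $\varlimsup g_Q(n)/n \leq \sqrt{8\varepsilon}$. Taking the contrapositive with $\varlimsup g_Q(n)/n = C > 0$: for every $\varepsilon < C^2/8$ and every $d < \omega$, there exists $\bar x \in X^d$ such that $Q(\bar x)$ determines a $d$-dimensional $\varepsilon$-box.

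Feeding these boxes into the contrapositive of (i) $\Rightarrow$ (ii), with its parenthetical generalization using the parameter $m$, one extracts by pigeonhole (clustering the box parameters $r_i$ into a sub-interval of length $\varepsilon/m$ within $[0, 1-\varepsilon]$) a shattered set of size $\lfloor d\varepsilon/m\rfloor$ for $Q_{r_d, r_d + \varepsilon(1-1/m)}$, where $r_d \in [0, 1-\varepsilon]$ depends on $d$.

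To convert this into shattering at a single fixed $r$, I would exploit compactness of $[0, 1-C^2/8]$: along a subsequence $r_d \to r^*$, and for any $\eta > 0$ eventually $|r_d - r^*| < \eta$, so the shattering transfers to $Q_{r^* + \eta, r^* + \varepsilon(1-1/m) - \eta}$, which is thereby not a Vapnik-Chervonenkis class. A diagonal extraction in the parameters $\varepsilon \nearrow C^2/8$, $m \to \infty$, and $\eta \to 0$ (with further compactness on the accumulation points of the various $r^*$'s) then produces a single $r$ for which $Q_{r, r+C^2/8}$ admits arbitrarily large shattered sets.

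The main obstacle is this final compactness/diagonal step: the natural output of the pigeonhole only gives non-VC at widths strictly below $C^2/8$, so reaching the exact width $C^2/8$ for a single $r$ requires threading the approximations together carefully. Compactness of $[0, 1-C^2/8]$ is what allows this to go through, essentially as an accumulation-point argument over the parameters $(\varepsilon, m, \eta, d)$.
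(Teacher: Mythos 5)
Your skeleton --- the contrapositive of \fref{item:VCEquivNonDiagonal} $\Longrightarrow$ \fref{item:VCEquivWidth} to produce boxes, the pigeonhole from \fref{item:VCEquivDiagonal} $\Longrightarrow$ \fref{item:VCEquivNonDiagonal} with its parameter $m$, and an accumulation point for the base points $r_d$ --- is the right way to read the corollary off the proof of \fref{thm:VCEquiv}. But the step you yourself flag as the main obstacle is a genuine gap, and compactness of $[0,1-C^2/8]$ does not close it: compactness controls the base point only, never the width. If $Q_{\rho,\rho+\delta}$ shatters $F$ then so does $Q_{\rho',\sigma'}$ for any $[\rho',\sigma']\subseteq[\rho,\rho+\delta]$, i.e.\ shattering passes only to \emph{narrower} gaps; your approximants have widths $\varepsilon(1-1/m)-2\eta<\varepsilon<C^2/8$, so no diagonal extraction over $(\varepsilon,m,\eta,d)$ ever produces a gap of width $C^2/8$. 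To pass to the limit in the width one would need the witnesses $q_w$ themselves to converge, and $Q$ is an arbitrary family with no closure or compactness property. (One can build $Q$ shattering arbitrarily large sets at every width $<\delta_0$ but at no width $\geq\delta_0$; such $Q$ fails to refute the corollary only because for it $C^2/8<\delta_0$ strictly --- which is exactly the slack your argument fails to exploit.)

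The fix is to extract the refinement \emph{before} the simplification $\log(1+\varepsilon/c)\leq\varepsilon/c$ in the proof of \fref{item:VCEquivNonDiagonal} $\Longrightarrow$ \fref{item:VCEquivWidth}: letting $n\to\infty$ there gives $\varlimsup g_Q(n)/n\leq(1+c)\sqrt{2\log(1+\varepsilon/c)}$, so at $c=1$ the hypothesis ``some $d$ admits no $d$-dimensional $\varepsilon$-box'' forces $C\leq2\sqrt{2\log(1+\varepsilon)}$. Since $\log(1+x)<x$ for $x>0$, this fails for every $\varepsilon<e^{C^2/8}-1$, and $e^{C^2/8}-1>C^2/8$ strictly. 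Hence one may fix $\varepsilon_0$ with $C^2/8<\varepsilon_0<e^{C^2/8}-1$ such that every dimension admits an $\varepsilon_0$-box. Your pigeonhole with $m$ large enough that $\varepsilon_0(1-1/m)>C^2/8$, followed by your accumulation-point step with $\eta$ small enough that $\varepsilon_0(1-1/m)-2\eta>C^2/8$, then lands at a single $r$ with width still exceeding $C^2/8$, and the final narrowing of the gap to exactly $[r,r+C^2/8]$ is in the harmless direction. No diagonalisation is needed.
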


In case $Q \subseteq \{0,1\}^X$, i.e., for collection of
characteristic functions, a box if exists has size one, so we can get
better bounds .
\begin{prp}
  \label{prp:CharacteristicVC}
  Let $\cC \subseteq \cP(X)$, $Q = \{\chi_C\colon C \in \cC\} \subseteq [0,1]^X$.
  Then $g_Q(n) \leq \sqrt{2n\log p_d(n)}$ and for $n$ big enough
  $g_Q(n) \leq \sqrt{2dn\log n}$, where
  $d = VC(\cC)$.
\end{prp}
\begin{proof}
  Let $d = VC(\cC) < \infty$.
  For every $\bar x \in X^n$ we have
  $|Q(\bar x)| \leq f_\cC(n) \leq p_d(n)$ by
  \fref{fct:VapnikChervonenkisIndexBound}.
  For $n$ large enough we have $p_d(n) \leq n^d$ and by
  \fref{lem:GaussianMeanWidthSizeBound}:
  \begin{gather*}
    g_Q(n) \leq \sqrt{2n\log p_d(n)} \leq \sqrt{2dn\log n}.
    \qedhere
  \end{gather*}
\end{proof}

We can now switch to a more symmetric situation.
Let $X$ and $Y$ be two sets, $S \subseteq X\times Y$.
For $x \in X$ let
$S_x = \{y \in Y\colon (x,y) \in S\}$ and for $y \in Y$ let
$S^y = \{x \in X\colon (x,y) \in S\}$.
Thus $S$ gives rise to two families of subsets
$S^Y = \{S^y\colon y \in Y\} \subseteq \cP(X)$ and
$S_X = \{S_x\colon x \in X\} \subseteq \cP(Y)$.

Similarly, if $S \sqsubseteq X\times Y$
we may define $S^y \sqsubseteq X$ by $x \in S^y \Longleftrightarrow (x,y) \in S$ and
$x \notin S^y \Longleftrightarrow (x,y) \notin S$.
Continuing as above we obtain two families
of fuzzy subsets $S^Y \subseteq \sfP(X)$ and $S_X \subseteq \sfP(Y)$.

\begin{fct}
  \label{fct:VapnikChervonenkisSymmetry}
  Let $S \sqsubseteq X \times Y$.
  Then $S_X$ is a Vapnik-Chervonenkis class if and only if $S^Y$ is,
  in which case $VC(S_X) \leq 2^{VC(S^Y)}$ and vice versa.

  We say in this case that $S$ is a \emph{dependent} relation.
\end{fct}
\begin{proof}
  In case $S \subseteq X \times Y$ this is proved in
  \cite[Chapter~5]{vandenDries:oMinimal}.
  The case of a fuzzy relation, while not considered there, is
  identical.
\end{proof}

Finally, a function $\varphi\colon X\times Y \to [0,1]$ gives rise to two families 
of functions
$\varphi^Y = \{\varphi^y\colon y \in Y\} = \{\varphi(\cdot,y)\colon y \in Y\} \subseteq [0,1]^X$ and similarly
$\varphi_X = \{\varphi_x\colon x \in X\} \subseteq [0,1]^Y$.

\begin{prp}
  \label{prp:ContinuousVapnikChervonenkisSymmetry}
  Let $X$ and $Y$ be sets, $\varphi\colon X\times Y \to [0,1]$ any function.
  Then $\varphi^Y$ is a Vapnik-Chervonenkis class if and only if
  $\varphi_X$ is.

  In that case we say that $\varphi$ is \emph{dependent}.
\end{prp}
\begin{proof}
  For $0\leq r<s\leq1$ define $\varphi_{r,s} \sqsubseteq X\times Y$ as usual.
  Then $(\varphi_{r,s})_X = (\varphi_X)_{r,s}$ is a Vapnik-Chervonenkis class if
  and only if $(\varphi_{r,s})^Y = (\varphi^Y)_{r,s}$ is.
\end{proof}

\begin{lem}
  \label{lem:DependentLimit}
  A uniform limit of dependent functions is dependent.
\end{lem}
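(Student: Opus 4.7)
The plan is to prove the contrapositive via the criterion \fref{item:VCEquivNonDiagonal} of \fref{thm:VCEquiv}: namely, $\varphi$ is dependent if and only if for every $\varepsilon > 0$ there exists $d$ such that no $\bar x \in X^d$ has $\varphi^Y(\bar x)$ determining a $d$-dimensional $\varepsilon$-box. Suppose $\varphi_n \to \varphi$ uniformly with each $\varphi_n$ dependent, and assume toward a contradiction that $\varphi$ fails to be dependent. Then there is $\varepsilon > 0$ such that for every $d$ we can find $\bar x \in X^d$, a vector $\bar r \in [0,1-\varepsilon]^d$, and points $\{a_w\colon w\subseteq d\} \subseteq \varphi^Y(\bar x)$ witnessing a $d$-dimensional $\varepsilon$-box.

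The key observation is that the ``box-determination'' property is robust under small uniform perturbations. Fix $\delta = \varepsilon/4$ and pick $n$ large enough that $\|\varphi - \varphi_n\|_\infty \leq \delta$. Each witness $a_w \in \varphi^Y(\bar x)$ arises as $(\varphi(x_i,y_w))_{i<d}$ for some $y_w \in Y$, and the corresponding point $a'_w = (\varphi_n(x_i,y_w))_{i<d} \in \varphi_n^Y(\bar x)$ satisfies $|a_w(i) - a'_w(i)| \leq \delta$ coordinatewise. Hence if $i \in w$ then $a_w(i) \geq r_i + \varepsilon$ gives $a'_w(i) \geq r_i + \varepsilon - \delta = (r_i + \delta) + (\varepsilon - 2\delta)$, while if $i \notin w$ then $a_w(i) \leq r_i$ gives $a'_w(i) \leq r_i + \delta$. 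Setting $\bar r\,' = \bar r + \delta$ (still in $[0, 1-\varepsilon+\delta]^d \subseteq [0, 1-(\varepsilon/2)]^d$), we see that $\varphi_n^Y(\bar x)$ determines a $d$-dimensional $(\varepsilon/2)$-box between $\bar r\,'$ and $\bar r\,' + \varepsilon/2$.

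Since this construction works for arbitrarily large $d$ with the same $\varepsilon/2$, the function $\varphi_n$ fails the criterion \fref{item:VCEquivNonDiagonal} at parameter $\varepsilon/2$, so $\varphi_n$ is not dependent, contradicting the hypothesis. There is no real obstacle here: the argument is essentially a straightforward ``slack absorption'' in the thresholds $\bar r$ and $\varepsilon$, made possible precisely by the fact that the notion of an $\varepsilon$-box has two parameters (thresholds and gap) that can both be shifted slightly.
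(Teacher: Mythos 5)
Your proof is correct and is essentially the paper's argument: the paper likewise absorbs the uniform error into the thresholds, passing from $\varphi_{r,r+3\varepsilon}$ independent to $(\varphi_n)_{r+\varepsilon,r+2\varepsilon}$ independent once $|\varphi-\varphi_n|\leq\varepsilon$. The only difference is cosmetic — you route the same slack-absorption through the $\varepsilon$-box characterisation of \fref{thm:VCEquiv} rather than directly through the fuzzy classes $(\varphi^Y)_{r,s}$ of the definition.
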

\begin{proof}
  Let $\varphi_n\colon X\times Y \to [0,1]$ be dependent converging uniformly to $\varphi$.
  Assume $\varphi$ is independent, so say $\varphi_{r,r+3\varepsilon}$ is independent for
  some $\varepsilon > 0$ and $r \in [0,1-3\varepsilon]$.
  Let $n$ be large enough such that $|\varphi-\varphi_n| \leq \varepsilon$.
  Then $(\varphi_n)_{r+\varepsilon,r+2\varepsilon}$ is independent, contrary to hypothesis.
\end{proof}

\section{Crushing convex compacts}
\label{sec:Crush}

Let $K \subseteq \setR^n$ be a convex compact, $u \in S^{n-1}$ a fixed direction
vector.
We would like to construct a new convex compact $K_u$ by crushing all
points below the hyperplane $u^\perp$ to the hyperplane.
Define the two half spaces and a mapping $S\colon \setR^n \to \setR^n$ as follows:
\begin{gather*}
  H^+ = \{x \in \setR^n\colon \langle x,u\rangle \geq 0\}, \qquad
  H^- = \{x \in \setR^n\colon \langle x,u\rangle \leq 0\}, \\
  S(x) =
  \begin{cases}
    x & x \in H^+ \\
    P_{u^\perp}(x) & x \in H^-.
  \end{cases}
\end{gather*}
We then let
\begin{gather*}
  K_u = \Conv(S(K)) = \Conv((K\cap H^+) \cup P_{u^\perp}(K\cap H^-)).
\end{gather*}

We would like to show that $w_G(K_u) \leq w_G(K)$.

\begin{lem}
  \label{lem:CrushedHeightProperties}
  Let $K$, $u$ and $K_u$ be as above.
  If $h_K(-u) \leq 0$ then $K = K_u$.
  If $h_K(-u) \geq 0$ then we have
  for $y \in \setR^n$, $y' = P_{u^\perp}(y)$:
  \begin{align*}
    h_{K_u}(y) & = \max(h_K(y),h_K(P_{u^\perp}(y))) && y \in H^+, \\
    h_{K_u}(y) & \leq \min \{ h_K(z)\colon z \in [y,P_{u^\perp}(y)] \} && y \in H^-.
  \end{align*}
\end{lem}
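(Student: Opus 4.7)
The plan is to compute $h_{K_u}$ explicitly, exploiting the fact that a linear functional attains its supremum over a convex hull at a generator, and then to track sign conditions on each of the two half-spaces separately.

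For the first assertion I would observe that $h_K(-u) \leq 0$ means $\langle x, u\rangle \geq 0$ for every $x \in K$; hence $K \subseteq H^+$ and $K \cap H^- \subseteq u^\perp$. Since $P_{u^\perp}$ fixes $u^\perp$ pointwise, $S$ restricts to the identity on $K$, so $K_u = \Conv(K) = K$.

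For the main case $h_K(-u) \geq 0$ I would write $y = y' + tu$ with $t = \langle y,u\rangle$, and for $x \in \setR^n$ set $s = \langle x,u\rangle$, so that $\langle x,y\rangle = \langle x,y'\rangle + ts$. Because the supremum of a linear functional over $\Conv(A)$ equals its supremum over $A$, and because $P_{u^\perp}$ is self-adjoint (giving $\langle P_{u^\perp}(x),y\rangle = \langle x,y'\rangle$), I would obtain the working formula
\[
  h_{K_u}(y) = \max\Bigl(\sup_{x\in K\cap H^+}\langle x,y\rangle,\ \sup_{x\in K\cap H^-}\langle x,y'\rangle\Bigr).
\]
From here the two cases reduce to sign bookkeeping. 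When $t \geq 0$: for $x \in K\cap H^-$ one has $s \leq 0$, whence $\langle x,y\rangle \leq \langle x,y'\rangle$, and symmetrically for $x \in K\cap H^+$ one has $\langle x,y'\rangle \leq \langle x,y\rangle$. Consequently the two additional suprema appearing in $\max(h_K(y), h_K(y'))$ but absent from the formula above are dominated, and the first identity drops out. When $t \leq 0$, I would fix $r \in [t,0]$ and let $z = y' + ru \in [y,y']$, then check both of the following: for $x \in K\cap H^+$ we have $s \geq 0$, so $ts \leq rs \leq 0$ gives $\langle x,y\rangle \leq \langle x,z\rangle$ and hence $\sup_{K \cap H^+}\langle x,y\rangle \leq h_K(z)$; for $x \in K\cap H^-$ we have $s \leq 0$, so $rs \geq 0$ gives $\langle x,y'\rangle \leq \langle x,z\rangle$ and hence $\sup_{K \cap H^-}\langle x,y'\rangle \leq h_K(z)$. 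Taking the maximum of these two suprema yields $h_{K_u}(y) \leq h_K(z)$ for every $z \in [y,y']$, as required.

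The argument is essentially bookkeeping of signs and I do not anticipate a genuine obstacle. The only point that deserves care is the reduction to the two-summand formula for $h_{K_u}(y)$, which rests jointly on the convex-hull identity for linear functionals and on the self-adjointness of $P_{u^\perp}$; once that formula is in hand, both the equality (for $y \in H^+$) and the inequality (for $y \in H^-$) follow from the trivial observation that a product $ts$ of quantities of prescribed signs is non-positive or non-negative as the case requires.
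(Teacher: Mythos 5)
Your proof is correct and follows essentially the same route as the paper's: both reduce $h_{K_u}$ to the suprema of $\langle \cdot\,,y\rangle$ over $K\cap H^+$ and over $P_{u^\perp}(K\cap H^-)$, and then compare the resulting terms according to the sign of $\langle x,u\rangle\langle y,u\rangle$. The only difference is organizational: you package the reduction as an explicit two-term max formula for $h_{K_u}(y)$, whereas the paper instead invokes sub-additivity of $h_{K_u}$ together with $h_{K_u}(-u)=0$ to obtain the comparisons $h_{K_u}(y)\geq h_{K_u}(y')$ and $h_{K_u}(y)\leq h_{K_u}(z)$; your version is, if anything, slightly more self-contained.
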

\begin{proof}
  If $h_K(-u) \leq 0$ then $K  \subseteq H^+$ and $S(K) = K$.
  Consider the case $h_K(-u) \geq 0$.
  In that case clearly $h_{K_u}(u) = 0$ and
  $h_{K_u}$ agrees with $h_K$ on $u^\perp$.
  Let us also observe that if $y \in \setR^n$ then the 
  $h_{K_u}(y) = \langle x,y\rangle$ for some extremal point $x \in K_u$, in which case
  we have in fact $x \in S(K)$.
  Thus we always have $h_{K_u}(y) = \langle S(x),y\rangle$, $x \in K$.

  Let us consider the case where $y \in H^+$, i.e.,
  $y = y' + \lambda u$ where $y' \perp u$ and $\lambda \geq 0$.
  Say $h_K(y) = \langle x,y\rangle$, $x \in K$.
  Then $\langle S(x),y\rangle \geq \langle x,y\rangle$ and thus $h_{K_u} \geq h_K(y)$.
  Since $h_{K_u}$ is sub-additive we also have
  $h_{K_u}(y) = h_{K_u}(y'+\lambda u) + h_{K_u}(-\lambda u) \geq h_{K_u}(y')$.
  Thus $h_{K_u}(y) \geq \max(h_K(y),h_K(y'))$.
  On the other hand, we know that $h_{K_u}(y) = \langle S(x),y\rangle$ for some
  $x \in K$.
  If $x \in H^+$ then $h_{K_u}(y) \leq h_K(y)$.
  If $x \in H^-$
  then $\langle S(x),y\rangle = \langle S(x),y'\rangle = \langle x,y'\rangle$ so
  $h_{K_u}(y) \leq h_K(y')$.
  Either way $h_{K_u}(y) \leq \max(h_K(y),h_K(y'))$ and the first case is proved.

  Now assume $y \in H^-$.
  Let us make first some general observations.
  First, if $h_{K_u}(y) = \langle S(x),y\rangle$, $x \in K$, then
  $\langle S(x),y\rangle \leq \langle x,y\rangle$ whereby $h_{K_u}(y) \leq h_K(y)$.
  Now write $y = y' - \lambda u$ where $y' \perp u$ and
  $\lambda \geq 0$.
  Let $z \in [y,y'] \subseteq H^-$,
  i.e., $z = y'-\mu u$ for $\mu \in [0,\lambda]$.
  Then $h_{K_u}(y) \leq h_{K_u}(z) + h_{K_u}(-(\lambda-\mu)u) = h_{K_u}(z) \leq h_K(z)$.
  We have thus shown that
  $h_{K_u}(y) \leq \min \{ h_K(z) \colon z \in [y',y] \}$.
\end{proof}

Can the second inequality be improved to an equality?
Either way, the inequalities we have suffice to prove:

\begin{lem}
  Let $K$ and $K_u$ be as above, $y' \in u^\perp$ and
  $\lambda \geq 0$.
  Then
  $$h_{K_u}(y' + \lambda u) + h_{K_u}(y' - \lambda u)
  \leq h_K(y' + \lambda u) + h_K(y' - \lambda u).$$
\end{lem}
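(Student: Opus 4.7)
The plan is to reduce everything to three scalar quantities and then do a short two-case argument. Set
\[
a = h_K(y' + \lambda u), \qquad b = h_K(y' - \lambda u), \qquad c = h_K(y').
\]
If $h_K(-u) \leq 0$ then the previous lemma gives $K_u = K$ and the inequality is a trivial equality, so I assume $h_K(-u) \geq 0$ for the rest.

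Now I apply the previous lemma. Since $y' + \lambda u \in H^+$, it yields
\[
h_{K_u}(y' + \lambda u) = \max(a, c).
\]
Since $y' - \lambda u \in H^-$ and the segment $[y' - \lambda u,\, y']$ contains both endpoints, the minimum bound yields the two inequalities I will actually need:
\[
h_{K_u}(y' - \lambda u) \leq b \qquad \text{and} \qquad h_{K_u}(y' - \lambda u) \leq c.
\]
The key auxiliary fact is a convexity-type inequality for $h_K$ along the line through $y'$: by sub-additivity and positive homogeneity of $h_K$,
\[
2c = h_K(2y') = h_K\bigl((y' + \lambda u) + (y' - \lambda u)\bigr) \leq a + b.
\]

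Using $\max(a,c) + d = \max(a+d,\, c+d)$, I rewrite the left-hand side as
\[
h_{K_u}(y' + \lambda u) + h_{K_u}(y' - \lambda u) = \max\bigl(a + h_{K_u}(y' - \lambda u),\; c + h_{K_u}(y' - \lambda u)\bigr).
\]
For the first argument of the maximum I apply $h_{K_u}(y' - \lambda u) \leq b$ to get $\leq a + b$. For the second I apply $h_{K_u}(y' - \lambda u) \leq c$ to get $\leq 2c$, which is $\leq a + b$ by the displayed convexity inequality. Hence both arguments are bounded by $a + b = h_K(y' + \lambda u) + h_K(y' - \lambda u)$, and so is their maximum.

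There is no real obstacle: once the previous lemma is in hand, the content is essentially that the two upper bounds for $h_{K_u}(y' - \lambda u)$ (by $b$ and by $c$) exactly match the two cases coming from whether the maximum $\max(a,c)$ is achieved at $a$ or at $c$, and sub-additivity plus positive homogeneity of $h_K$ bridges the second case.
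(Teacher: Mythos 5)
Your proof is correct and follows essentially the same route as the paper's: the case split hidden in your $\max(a+d,\,c+d)$ identity is exactly the paper's dichotomy on which of $h_K(y'+\lambda u)$, $h_K(y')$ realizes $h_{K_u}(y'+\lambda u)$, and the closing step $2c = h_K(2y') \leq a+b$ is the same subadditivity argument. The only (harmless) difference is that you read the bound $h_{K_u}(y'-\lambda u)\leq h_K(y')$ directly off the endpoint $z=y'$ in the minimum from the previous lemma, whereas the paper re-derives it via convexity of $t\mapsto h_{K_u}(y'+tu)$.
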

\begin{proof}
  Consider the mapping $s(t) = h_{K_u}(y'+tu)$, which we know to be
  convex.
  If $s(0) \leq s(\lambda)$ then
  $h_{K_u}(y' + \lambda u) = h_K(y' + \lambda u)$, and we already know that
  $h_{K_u}(y' - \lambda u) \leq h_K(y' - \lambda u)$.

  If $s(0) \geq s(\lambda)$ then $h_{K_u}(y' + \lambda u) = h_K(y')$.
  By convexity of $s$ it must be decreasing
  for all $t \leq 0$, so in particular
  $$h_{K_u}(y-\lambda u) \leq \min\{s(t)\colon t\in[-\lambda,0]\} = s(0) = h_K(y').$$
  Thus:
  \begin{gather*}
    h_{K_u}(y' + \lambda u) + h_{K_u}(y' - \lambda u)
    \leq h_K(2y') \leq h_K(y' + \lambda u) + h_K(y' - \lambda u).
    \qedhere
  \end{gather*}
\end{proof}

\begin{prp}
  \label{prp:CrushedMeanWidth}
  Let $K$ and $K_u$ be as above.
  Then $w(K_u) \leq w(K)$ and $w_G(K_u) \leq w_G(K)$.
\end{prp}
\begin{proof}
  It will be enough to prove the first inequality.
  For $y \in S^{n-1}$ let $y'$ always denote $P_{u^\perp}(y)$ and
  $\lambda = |\langle y,u\rangle|$.
  We have:
  \begin{align*}
    2w(K_u)
    & = 2\int_{S^{n-1}} w(K_u,y) \, d\sigma(y) \\
    & = \int_{S^{n-1}} (w(K_u,y'+\lambda u) + w(K_u,y'-\lambda u)) \, d\sigma(y) \\
    & = \int_{S^{n-1}}
    \begin{aligned}[t]
      & \big( h_{K_u}(y'+\lambda u) + h_{K_u}(y'-\lambda u) \\
      & \qquad + h_{K_u}(-y'+\lambda u) + h_{K_u}(-y'-\lambda u) \big) \, d\sigma(y)
    \end{aligned} \\
    & \leq \int_{S^{n-1}}
    \begin{aligned}[t]
      & \big( h_K(y'+\lambda u) + h_K(y'-\lambda u) \\
      & \qquad + h_K(-y'+\lambda u) + h_K(-y'-\lambda u) \big) \, d\sigma(y)
    \end{aligned} \\
    & = \ldots = 2w(K).
    \qedhere
  \end{align*}
\end{proof}

Now let $K \subseteq \setR^n$ be a convex compact,
$(e_i\colon i < n)$ the canonical base, and define
\begin{align*}
  K^+ & = (\ldots(K_{e_0})_{e_1}\ldots)_{e_{n-1}} \\
  & = \Conv\big( (x_0\lor0,\ldots,x_{n-1}\lor0)\colon \bar x \in K \big).
\end{align*}

\begin{cor}
  \label{cor:PositiveMeanWidth}
  Let $K \subseteq \setR^n$ be a convex compact.
  Then $w_G(K^+) \leq w_G(K)$.
\end{cor}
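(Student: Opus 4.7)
The plan is to apply \fref{prp:CrushedMeanWidth} repeatedly along the standard basis directions. By definition, $K^+$ is obtained from $K$ by iterated crushing:
\begin{gather*}
  K^{(0)} = K, \qquad K^{(i+1)} = \big(K^{(i)}\big)_{e_i} \text{ for } 0 \leq i < n, \qquad K^+ = K^{(n)}.
\end{gather*}
At each stage $K^{(i)}$ is a convex compact (this is immediate from the definition of the crushing operation), so \fref{prp:CrushedMeanWidth} applies with the unit vector $u = e_i$ and yields $w_G(K^{(i+1)}) \leq w_G(K^{(i)})$. Chaining these $n$ inequalities gives
\begin{gather*}
  w_G(K^+) = w_G(K^{(n)}) \leq w_G(K^{(n-1)}) \leq \cdots \leq w_G(K^{(0)}) = w_G(K),
\end{gather*}
as required.

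There is no real obstacle here; the corollary is a direct telescoping consequence of the single-direction result established in \fref{prp:CrushedMeanWidth}. The only thing one might want to double-check is that the two descriptions of $K^+$ given in the statement agree, but for the inequality asserted in the corollary it suffices to take the iterated-crushing definition as the primary one and invoke the previous proposition $n$ times.
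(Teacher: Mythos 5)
Your proof is correct and is exactly the argument the paper intends: the corollary follows by applying \fref{prp:CrushedMeanWidth} once for each canonical basis direction $e_0,\ldots,e_{n-1}$ and telescoping the resulting inequalities. The paper states this corollary without proof precisely because the iterated-crushing definition of $K^+$ makes it immediate.
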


We remind the reader that for $x,y \in [0,1]$ we define
$\lnot x = 1-x \in [0,1]$ and $x \dotminus y = \max(x-y,0) \in [0,1]$.
Moreover, for every $n \geq 1$, the family of functions
$[0,1]^n \to [0,1]$ one can construct with the three operations
$\{\half[x],\lnot x,x \dotminus y\}$ is dense in the space all continuous
functions from $[0,1]^n$ to $[0,1]$
(see \cite{BenYaacov-Usvyatsov:CFO}).

\begin{cor}
  \label{cor:CombinationMeanWidth}
  Let $X$ and $Y$ be sets, $\varphi,\psi\colon X\times Y \to [0,1]$.
  Then $g_{(\lnot \varphi)^Y} = g_{\varphi^Y}$, $g_{(\varphi/2)^Y} = \half g_{\varphi^Y}$ and
  $g_{(\varphi\dotminus \psi)^Y} \leq g_{\varphi^Y} + g_{\psi^Y}$.

  Thus, if $\varphi$ and $\psi$ are Vapnik-Chervonenkis classes then so are
  $\lnot \varphi$, $\half \varphi$ and $\varphi \dotminus \psi$.
\end{cor}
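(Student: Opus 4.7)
The plan is to verify three quantitative identities or inequalities on $g$-functions, from which the qualitative VC conclusions follow immediately via \fref{thm:VCEquiv}. Throughout I use that $w_G$ depends only on closed convex hulls, is invariant under translation (translating $K$ adds to $h_K$ a linear functional whose Gaussian expectation vanishes) and under the reflection $K \mapsto -K$ (since $G_n$ and $-G_n$ are equidistributed), and is additive and positively homogeneous by \fref{lem:GaussianMeanWidthProperties}.

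The first two cases are immediate. For any $\bar x \in X^n$, $(\lnot\varphi)^Y(\bar x) = \mathbf{1} - \varphi^Y(\bar x)$, so translation and reflection invariance give the equality $g_{(\lnot\varphi)^Y} = g_{\varphi^Y}$; and $(\half\varphi)^Y(\bar x) = \half\varphi^Y(\bar x)$, so positive homogeneity gives $g_{(\half\varphi)^Y} = \half g_{\varphi^Y}$.

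The heart of the argument is the case of $\varphi \dotminus \psi$. Fix $\bar x \in X^n$ and set $A = \Conv(\varphi^Y(\bar x))$, $B = \Conv(\psi^Y(\bar x))$, $C = \Conv((\varphi\dotminus\psi)^Y(\bar x))$. Working coordinatewise, for each $y \in Y$ one has $(\varphi \dotminus \psi)^y(\bar x) = (\varphi^y(\bar x) - \psi^y(\bar x))^+$, and since every such generator of $C$ lies in the crush (in the sense of \fref{sec:Crush}, applied successively along the canonical base) of $A + (-B)$, one obtains
$$C \subseteq (A + (-B))^+.$$
Combining monotonicity of $w_G$, \fref{cor:PositiveMeanWidth}, additivity, and reflection invariance yields
$$w_G(C) \leq w_G\bigl((A + (-B))^+\bigr) \leq w_G(A + (-B)) = w_G(A) + w_G(B),$$
and taking the supremum over $\bar x$ gives $g_{(\varphi \dotminus \psi)^Y} \leq g_{\varphi^Y} + g_{\psi^Y}$.

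For the Vapnik-Chervonenkis consequences, \fref{thm:VCEquiv} identifies being a VC class with the sublinear growth condition $g_{(\cdot)^Y}(n)/n \to 0$, which is preserved by the three operations in view of the relations just established. The only delicate step is the set-theoretic inclusion $C \subseteq (A + (-B))^+$; once it is in hand, the Crush bound of \fref{sec:Crush} and the additivity of $w_G$ make the remainder of the computation routine.
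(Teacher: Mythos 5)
Your proposal is correct and follows essentially the same route as the paper: the first two identities by elementary invariances of $w_G$, and the $\dotminus$ case by combining the coordinatewise identity $(\varphi\dotminus\psi)^y(\bar x) = (\varphi^y(\bar x)-\psi^y(\bar x))^+$ with \fref{cor:PositiveMeanWidth}, additivity of $w_G$ on Minkowski sums, and reflection invariance. The only cosmetic difference is the order of operations (you pass to $A+(-B)$ before crushing, the paper crushes $(\varphi-\psi)^Y(\bar x)$ first), which changes nothing.
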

\begin{proof}
  Clearly $g_{(\lnot \varphi)^Y} = g_{\varphi^Y}$, $g_{(\varphi/2)^Y} = \half g_{\varphi^Y}$.
  We are left with $g_{(\varphi\dotminus \varphi)^Y} \leq g_{\varphi^Y} + g_{\psi^Y}$.

  Consider the function $\varphi - \psi \colon X\times Y \to [-1,1]$, and observe that for
  $\bar x \in X^n$ we have
  $(\varphi\dotminus\psi)^Y(\bar x) = \left( (\varphi-\psi)^Y(\bar x) \right)^+ \subseteq
  [0,1]^n$.
  We thus have:
  \begin{gather*}
    w_G\left( (\varphi\dotminus\psi)^Y(\bar x) \right)
    = w_G\left( \left( (\varphi-\psi)^Y(\bar x) \right)^+ \right)
    \leq w_G\left( (\varphi-\psi)^Y(\bar x) \right).
  \end{gather*}
  On the other hand we also have
  $(\varphi-\psi)^Y \subseteq \varphi^Y - \psi^Y \subseteq [0,1]^X$, and for $\bar x \in X^n$:
  \begin{gather*}
    w_G\left( (\varphi-\psi)^Y(\bar x) \right)
    \leq w_G\left( \varphi^Y(\bar x) - \psi^Y(\bar x) \right)
    = w_G\left( \varphi^Y(\bar x) \right) + w_G\left( \psi^Y(\bar x) \right).
  \end{gather*}
  Thus $w_G\left( (\varphi\dotminus\psi)^Y(\bar x) \right)
  \leq w_G\left( \varphi^Y(\bar x) \right) + w_G\left( \psi^Y(\bar x) \right)$,
  whereby $g_{(\varphi\dotminus\psi)^Y} \leq g_{\varphi^Y} + g_{\psi^Y}$.
\end{proof}

\begin{lem}
  \label{lem:DependentCombinations}
  Let $X$ and $Y$ be sets, $\varphi,\psi\colon X\times Y \to [0,1]$ dependent.
  Then $\lnot \varphi$, $\half \varphi$ and $\varphi \dotminus \psi$ are dependent as well.
\end{lem}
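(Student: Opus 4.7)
The plan is to bootstrap directly from the earlier material. A function $\varphi$ on $X \times Y$ is, by \fref{prp:ContinuousVapnikChervonenkisSymmetry}, dependent if and only if the induced family $\varphi^Y \subseteq [0,1]^X$ is a Vapnik-Chervonenkis class. Thus the lemma is, at its heart, just a translation of \fref{cor:CombinationMeanWidth} (whose content is about VC classes of functions) back into the language of dependent relations on a product.

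Accordingly, I would first invoke \fref{prp:ContinuousVapnikChervonenkisSymmetry} to reformulate the hypothesis: $\varphi^Y$ and $\psi^Y$ are Vapnik-Chervonenkis classes of $[0,1]$-valued functions on $X$. Next, I would apply \fref{cor:CombinationMeanWidth} to conclude that each of $(\lnot\varphi)^Y$, $(\half\varphi)^Y$ and $(\varphi \dotminus \psi)^Y$ is again a Vapnik-Chervonenkis class; one only needs to check that these really coincide with the corresponding families built from $\varphi$ and $\psi$, which is immediate from the pointwise nature of the three operations $\lnot$, $\half$, $\dotminus$. Finally, one more appeal to \fref{prp:ContinuousVapnikChervonenkisSymmetry} turns the VC-class conclusion back into dependence of $\lnot\varphi$, $\half\varphi$ and $\varphi \dotminus \psi$ as functions on $X\times Y$.

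There is no real obstacle here: the combinatorial work was already absorbed into \fref{cor:CombinationMeanWidth}, whose proof bounded the growth rate $g_{(\varphi\dotminus\psi)^Y}(n)$ by $g_{\varphi^Y}(n) + g_{\psi^Y}(n)$ via the crushing estimate of \fref{prp:CrushedMeanWidth} and the additivity of $w_G$ under Minkowski sums. The only thing worth being careful about is to confirm that the hypotheses of \fref{cor:CombinationMeanWidth} are met on \emph{both} sides of the product when needed; but as \fref{prp:ContinuousVapnikChervonenkisSymmetry} makes the VC property insensitive to this choice, a single application on the $Y$-side suffices. The whole proof should therefore fit in a few lines.
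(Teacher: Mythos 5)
Your proposal is correct and follows essentially the same route as the paper, whose proof is simply an appeal to \fref{cor:CombinationMeanWidth}: that corollary already records the bounds on the growth functions $g_{(\lnot\varphi)^Y}$, $g_{(\half\varphi)^Y}$ and $g_{(\varphi\dotminus\psi)^Y}$ and draws the Vapnik-Chervonenkis conclusion, which via the definition of dependence (and \fref{thm:VCEquiv}) is exactly the statement of the lemma. Your extra remarks about the pointwise nature of the operations and the symmetry between the $X$- and $Y$-sides are harmless elaborations of the same argument.
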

\begin{proof}
  By \fref{cor:CombinationMeanWidth}.
\end{proof}

\begin{prp}
  \label{prp:DependentCombinations}
  Let $X$ and $Y$ be sets, $\varphi_n\colon X \times Y \to [0,1]$ dependent
  functions for $n < \omega$,
  and let $\psi\colon [0,1]^\omega \to [0,1]$ be an arbitrary continuous function.
  Then $\psi \circ (\varphi_n)\colon X\times Y \to [0,1]$ is dependent.
\end{prp}
\begin{proof}
  By results in \cite{BenYaacov-Usvyatsov:CFO} one can approximate $\psi$
  uniformly with expressions written with $\lnot$, $\half$ and
  $\dotminus$.
  Such expressions in the $\varphi_n$ are dependent by
  \fref{lem:DependentCombinations}.
  Thus $\psi \circ (\varphi_n)$ is a uniform limit of dependent functions, and is
  therefore dependent by \fref{lem:DependentLimit}.
\end{proof}

\section{Random dependent relations and functions}
\label{sec:Randomise}

In this section $X$ and $Y$ will be sets as before.
However, we will be interested here in dependent relations and
functions on $X \times Y$ which may vary (randomly).

Let $\Omega$ be an arbitrary set for the time being.
A family of relations on $X \times Y$, indexed by $\Omega$, can be viewed as a
relation $S \subseteq \Omega \times X \times Y$.
For every $\omega \in \Omega$ we obtain a relation $S_\omega \subseteq X \times Y$ and we may view
$S$ equivalently as a function $S\colon \Omega \to \cP(X \times Y)$.
Similarly, a family of $[0,1]$-valued functions on $X \times Y$ will be
given as $\varphi\colon \Omega \times X \times Y \to [0,1]$ or equivalently as
$\varphi \colon \Omega \to [0,1]^{X\times Y}$ sending $\omega \mapsto \varphi_\omega = \varphi(\omega,\cdot,\cdot)$.
The usual passage from $S$ to its characteristic function $\chi_S$
commutes with these equivalent presentations.

We say that such a family $S = \{S_\omega\colon \omega \in \Omega\}$ is \emph{uniformly dependent}
if there is $d = d(S)$ such that $VC((S_\omega)^Y) \leq d$ for every $\omega \in \Omega$.
Similarly a family $\varphi = \{\varphi_\omega\colon \omega \in \Omega\}$ is uniformly dependent if
for every $\varepsilon > 0$ there is $d = d(\varphi,\varepsilon)$ such that
$VC\left( (\varphi_\omega^Y)_{[r,r+\varepsilon]} \right) \leq d$ for every
$r \in [0,1-\varepsilon]$ and $\omega \in \Omega$.
Clearly $S$ is uniformly dependent if and only if $\chi_S$ is.

It follows from the proof of \fref{thm:VCEquiv}
that $\varphi = \{\varphi_\omega\colon \omega \in \Omega\}$ is uniformly dependent
if and only if there is a function
$g\colon \setN \to \setR$ such that
$\lim \frac{g(n)}{n} = 0$ and
$g_{\varphi_\omega^Y} \leq g$ for every $\omega$.
Indeed, in case $\varphi$ is uniformly dependent then for every
$\omega$, $n$ and $\varepsilon$ we obtain:
\begin{gather*}
  g_{\varphi_\omega^Y}(n) \leq 
  2n \sqrt{
    2\varepsilon + \frac{d(\varphi,\varepsilon)}{n} \log\left( \frac{1-\varepsilon}{1+\varepsilon} \right)
    + \frac{\log p_{d(\varphi,\varepsilon)}(n)}{n}
  }.
\end{gather*}
Then a function $g$ as desired can be obtained by:
\begin{gather*}
  g(n) = \inf_{0 < \varepsilon < 1}
  2n \sqrt{
    2\varepsilon + \frac{d(\varphi,\varepsilon)}{n} \log\left( \frac{1-\varepsilon}{1+\varepsilon} \right)
    + \frac{\log p_{d(\varphi,\varepsilon)}(n)}{n}
  }.
\end{gather*}

Let us now consider \emph{random}
relations and functions on $X \times Y$.
We fix a probability space $(\Omega,\fB,\mu)$.
From now on we will only consider families
$S$ or $\varphi$ such that for $(x,y) \in X \times Y$ the event
$\{\omega\colon (x,y) \in S_\omega\}$ or the function
$\omega \mapsto \varphi_\omega(x,y)$ are measurable.
We may then define functions
$\bP[S],\bE[\varphi]\colon X\times Y \to [0,1]$ by
\begin{gather*}
  \bP[S](x,y) = \bP[(x,y) \in S], \qquad
  \bE[\varphi](x,y) = \bE[\varphi(x,y)].
\end{gather*}
If $S$ is measurable then so is
$\chi_S$ which is given by $(\chi_S)_\omega = \chi_{(S_\omega)}$
and then $\bE[\chi_S] = \bP[S]$.

\begin{thm}
  \label{thm:RandomCountableFuncVC}
  Let $X$, $Y$ be countable sets, $\varphi_\omega\colon X \times Y \to [0,1]$ a random family of
  functions on $X\times Y$.
  Then $g_{\bE[\varphi]^Y} \leq \bE[g_{\varphi_\omega^Y}]$ (and the latter is measurable).

  In particular, if $\varphi$ is uniformly dependent then $\bE[\varphi]$ is dependent.
\end{thm}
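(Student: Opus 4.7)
The plan is to exploit the commutation of the Gaussian mean width with integration of convex compacts (\fref{lem:GaussianMeanWidthProperties}), combined with the elementary observation that an expectation of a random point in $K_\omega$ lies in $\int K_\omega\,d\mu$. Fix $n$ and $\bar x = (x_0,\ldots,x_{n-1}) \in X^n$, and set $K_\omega = \overline{\Conv}\bigl(\varphi_\omega^Y(\bar x)\bigr) \subseteq [0,1]^n$. Since $Y$ is countable, for each $u \in S^{n-1}$ the map $\omega \mapsto h_{K_\omega}(u) = \sup_{y \in Y}\langle \varphi_\omega^y(\bar x),u\rangle$ is a countable supremum of measurable functions, hence measurable, so $\omega \mapsto K_\omega$ is an integrable family of convex compacts in the sense of \fref{sec:MeanWidth}.

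For each $y \in Y$, the point $\bE[\varphi]^y(\bar x)$ is, coordinate by coordinate, equal to $\int \varphi_\omega^y(\bar x)\,d\mu(\omega)$; since $\varphi_\omega^y(\bar x) \in K_\omega$ for every $\omega$, the observation in \fref{sec:MeanWidth} concerning integrals of measurable selections yields $\bE[\varphi]^y(\bar x) \in \int K_\omega\,d\mu$. Hence $\bE[\varphi]^Y(\bar x) \subseteq \int K_\omega\,d\mu$, and monotonicity of $w_G$ combined with \fref{lem:GaussianMeanWidthProperties} gives
\begin{gather*}
  w_G\bigl(\bE[\varphi]^Y(\bar x)\bigr)
  \leq w_G\!\left( \int K_\omega\,d\mu \right)
  = \int w_G(K_\omega)\,d\mu
  = \int w_G\bigl(\varphi_\omega^Y(\bar x)\bigr)\,d\mu
  \leq \bE\bigl[g_{\varphi_\omega^Y}(n)\bigr].
\end{gather*}
Taking the supremum over $\bar x \in X^n$ yields $g_{\bE[\varphi]^Y}(n) \leq \bE[g_{\varphi_\omega^Y}(n)]$. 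Measurability of the integrand on the right holds because, $X^n$ being countable, $g_{\varphi_\omega^Y}(n) = \sup_{\bar x \in X^n} w_G\bigl(\varphi_\omega^Y(\bar x)\bigr)$ is again a countable supremum of measurable functions of $\omega$.

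For the ``in particular'' clause, if $\varphi$ is uniformly dependent then the explicit bound noted just before the theorem provides a function $g \colon \setN \to \setR$ with $\lim g(n)/n = 0$ such that $g_{\varphi_\omega^Y}(n) \leq g(n)$ for every $\omega$. Consequently $g_{\bE[\varphi]^Y}(n) \leq g(n)$ and $\lim g_{\bE[\varphi]^Y}(n)/n = 0$, so $\bE[\varphi]^Y$ is a Vapnik--Chervonenkis class by \fref{thm:VCEquiv}, and \fref{prp:ContinuousVapnikChervonenkisSymmetry} then asserts that $\bE[\varphi]$ is dependent.

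The only delicate point is the measurability and integrability of $\omega \mapsto K_\omega$ required to invoke \fref{lem:GaussianMeanWidthProperties}; this is precisely where countability of $Y$ is used, while countability of $X$ enters only to ensure that $g_{\varphi_\omega^Y}(n)$ itself depends measurably on $\omega$.
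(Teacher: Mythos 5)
Your proof is correct and follows essentially the same route as the paper: fix $\bar x$, pass to the integrable family of convex hulls $K_\omega$, note $\bE[\varphi]^Y(\bar x) \subseteq \int K_\omega\,d\mu$, and apply monotonicity together with the commutation of $w_G$ with integration, with countability of $Y$ and $X$ used exactly where you say. You in fact spell out the inclusion $\bE[\varphi]^Y(\bar x) \subseteq \int K_\omega\,d\mu$ via measurable selections slightly more explicitly than the paper does.
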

\begin{proof}
  Let us fix $n$ and let $\bar x \in X^n$.
  Define
  \begin{gather*}
    \KK_{\bar x}(\omega) = \overline{\Conv}\big( \varphi_\omega^Y(\bar x) \big) \subseteq [0,1]^n.
  \end{gather*}
  Each $\KK_{\bar x}(\omega)$ is a convex compact and
  \begin{gather*}
    g_{\varphi^Y_\omega}(n) = \sup_{\bar x \in X^n} w_G(\KK_{\bar x}(\omega)).
  \end{gather*}
  Since $Y$ is assumed countable the family $\KK_{\bar x}$ is measurable for
  every $\bar x \in X$.
  It is moreover bounded and therefore integrable.
  Since $X$ is also assumed countable the function $\omega \mapsto g_{\varphi^Y_\omega}(n)$
  is measurable as well.
  For a fixed tuple $\bar x$ we
  have $\bE[\varphi]^Y(\bar x) \subseteq \bE[\KK_{\bar x}]$.
  Thus
  \begin{gather*}
    w_G\left( \bE[\varphi]^Y(\bar x) \right)
    \leq w_G(\bE[\KK]_{\bar x})
    = \bE[ w_G(\KK_{\bar x}) ] \leq \bE[g_{\varphi_\omega^Y}(n)].
  \end{gather*}
  It follows that $g_{\bE[\varphi]^Y} \leq \bE[g_{\varphi_\omega^Y}]$ as desired.

  If $\varphi$ is uniformly dependent then there is $g\colon \setN \to \setR$ such that
  $\frac{g(n)}{n} \to 0$ and
  $g \geq g_{\varphi_\omega^Y}$ for every $\omega \in \Omega$.
  Then $g_{\bE[\varphi]^Y} \leq g$ as well and by \fref{thm:VCEquiv}
  $\bE[\varphi]$ is dependent.
\end{proof}

\begin{cor}
  \label{cor:RandomFuncVC}
  Let $X$, $Y$ be sets, $\varphi = \{\varphi_\omega\colon \omega \in \Omega\}$ a measurable family of
  uniformly dependent functions.
  Then $\bE[\varphi] \colon X \times Y \to [0,1]$ is dependent.
\end{cor}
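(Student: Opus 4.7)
The plan is to reduce to \fref{thm:RandomCountableFuncVC} via a separability argument. First I would invoke the observation at the start of \fref{sec:Randomise} to extract a function $g\colon \setN \to \setR$ with $\lim g(n)/n = 0$ such that $g_{\varphi_\omega^Y} \leq g$ for every $\omega \in \Omega$. By \fref{thm:VCEquiv} it then suffices to establish $w_G(\bE[\varphi]^Y(\bar x)) \leq g(n)$ for every $n$ and every $\bar x \in X^n$.

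Fixing such $n$ and $\bar x = (x_0,\ldots,x_{n-1})$, the key observation is that the set $\bE[\varphi]^Y(\bar x) \subseteq [0,1]^n$ is separable, so it admits a countable dense subset $D$. Choosing for each $d \in D$ some $y_d \in Y$ with $\bE[\varphi]^{y_d}(\bar x) = d$ and setting $Y_0 = \{y_d \colon d \in D\}$, we obtain $D \subseteq \bE[\varphi]^{Y_0}(\bar x) \subseteq \bE[\varphi]^Y(\bar x)$. Since $h_K$, and hence $w_G$, depends only on the closed convex hull of a bounded set, this yields $w_G(\bE[\varphi]^Y(\bar x)) = w_G(\bE[\varphi]^{Y_0}(\bar x))$.

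Setting $X_0 = \{x_0,\ldots,x_{n-1}\}$, which is finite and in particular countable, I would then apply \fref{thm:RandomCountableFuncVC} to the restricted family $\varphi|_{X_0 \times Y_0}$. This restriction inherits the uniform bound $g$, since for every $\omega$ and every tuple $\bar x'$ from $X_0$ the set $(\varphi|_{X_0\times Y_0})_\omega^{Y_0}(\bar x')$ is a subset of $\varphi_\omega^Y(\bar x')$ and $w_G$ is monotone. The theorem then gives
\begin{gather*}
  w_G(\bE[\varphi]^{Y_0}(\bar x)) \leq g_{\bE[\varphi|_{X_0 \times Y_0}]^{Y_0}}(n) \leq \bE[g_{\varphi_\omega^{Y_0}}(n)] \leq g(n),
\end{gather*}
which together with the previous paragraph yields the desired bound, and another application of \fref{thm:VCEquiv} concludes that $\bE[\varphi]$ is dependent.

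The main obstacle I expect is the measurability issue that forced the countability hypothesis in \fref{thm:RandomCountableFuncVC}, where $\omega \mapsto \KK_{\bar x}(\omega)$ and $\omega \mapsto g_{\varphi^Y_\omega}(n)$ had to be measurable. The resolution is that for any single $\bar x \in X^n$ we only ever need the theorem applied to a restriction of $\varphi$ to a finite set of first coordinates and a countable set of second coordinates, both depending on $\bar x$; the uniform bound $g$ furnished by uniform dependence is common to all such restrictions, so the $\bar x$-dependence of $Y_0$ is harmless.
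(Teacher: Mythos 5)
Your proof is correct and follows the paper's strategy of reducing to the countable case of \fref{thm:RandomCountableFuncVC}. The paper does this more briefly by the contrapositive---non-dependence of $\bE[\varphi]$ is already witnessed on countable subsets $X_0 \subseteq X$, $Y_0 \subseteq Y$---whereas you carry out the reduction directly, tuple by tuple, using separability of $[0,1]^n$ and the invariance of $w_G$ under passing to a dense subset; both reductions are valid and rest on the same uniform bound $g$.
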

\begin{proof}
  If not then this is witnesses on countable subsets $X_0 \subseteq X$ and
  $Y_0 \subseteq Y$, contradicting \fref{thm:RandomCountableFuncVC}.
\end{proof}

\begin{cor}
  \label{cor:RandomSetVC}
  Let $X$, $Y$ be sets, $S = \{S_\omega\colon \omega \in \Omega\}$ a measurable family of
  uniformly dependent relations.
  Then $\bP[S] \colon X \times Y \to [0,1]$ is dependent.
\end{cor}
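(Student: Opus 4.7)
The plan is to reduce this to \fref{cor:RandomFuncVC} by passing to characteristic functions. Set $\varphi_\omega = \chi_{S_\omega}\colon X\times Y \to \{0,1\} \subseteq [0,1]$, and consider the family $\varphi = \{\varphi_\omega : \omega \in \Omega\}$. The paper has already observed that $\chi_S$ is measurable whenever $S$ is, and that $\bE[\chi_S] = \bP[S]$, so under our hypotheses $\varphi$ is a measurable family of $[0,1]$-valued functions with expectation $\bE[\varphi] = \bP[S]$.

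Next I would check that $\varphi$ is uniformly dependent in the function sense. Since each $\varphi_\omega$ takes values only in $\{0,1\}$, for every $0 \leq r < s \leq 1$ we have $(\varphi_\omega)_{r,s} = S_\omega$ as a (classical, not truly fuzzy) subset of $X \times Y$. In particular, $((\varphi_\omega)^Y)_{r,r+\varepsilon} = (S_\omega)^Y$ regardless of $r$ and $\varepsilon$, so uniform dependence of the relation family $S$ (with bound $d(S)$ on $VC((S_\omega)^Y)$) immediately supplies the uniform bound $d(\varphi,\varepsilon) = d(S)$ required in the definition of uniform dependence of the function family $\varphi$.

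Having verified the hypotheses, I would simply invoke \fref{cor:RandomFuncVC} to conclude that $\bE[\varphi] = \bP[S]$ is dependent as a $[0,1]$-valued function on $X\times Y$, which is what was to be shown. There is no real obstacle here; the whole content of the corollary is the bookkeeping observation that the ``random set'' case is a special case of the ``random function'' case via characteristic functions, together with the fact (already implicit in the earlier identification of classes $\cC \subseteq \cP(X)$ with families of characteristic functions $\{\chi_C : C \in \cC\}$) that uniform dependence is preserved under this identification.
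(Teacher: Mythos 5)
Your proposal is correct and is essentially identical to the paper's own proof, which reads in full ``Apply \fref{cor:RandomFuncVC} to $\chi_S$''; the paper has already recorded the two facts you verify, namely that $\bE[\chi_S]=\bP[S]$ and that $S$ is uniformly dependent if and only if $\chi_S$ is (via the observation that $Q_{r,s}=\cC$ for characteristic functions). Your write-up just makes that bookkeeping explicit.
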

\begin{proof}
  Apply \fref{cor:RandomFuncVC} to $\chi_S$.
\end{proof}

\section{Dependent and randomised theories}
\label{sec:DependentTheories}

In this final section we settle the model theoretic problem which
motivated the present study.
This consists mostly of translating consequences
of previous results to the model theoretic setting.
In order to avoid blowing this section up disproportionately with a lot of
introductory material we assume the reader is already familiar
with the basics of
classical model theory (see Poizat \cite{Poizat:Cours}) and its
generalisation to continuous logic
(see \cite{BenYaacov-Usvyatsov:CFO}).

Let $T$ be a (classical or continuous) first order theory.
\begin{dfn}
  We say that a formula $\varphi(\bar x,\bar y)$ is \emph{dependent} in $T$
  if for every $\cM \models T$,
  $\varphi^\cM$ is dependent on $M^n \times M^m$.

  We say that $T$ is \emph{dependent} if all formulae are.
\end{dfn}
In the case of a classical theory this is equivalent to the original
definition (see Laskowski \cite{Laskowski:VapnikChervonenkisClasses})
and it extends naturally to continuous logic.
If $T$ is a continuous dependent theory then
by \fref{lem:DependentLimit} every definable predicate in $T$ is
dependent.
In addition, it is easy to see using compactness that if
$\varphi(\bar x,\bar y)$ is
dependent in $T$ then it is uniformly so in all models of $T$.

For a continuous language $\cL$ let
$\cL^R$ consists of a $n$-ary predicate symbol
$\bE[\varphi(\bar x)]$ for every $n$-ary $\cL$-formula $\varphi(\bar  x)$.

\begin{thm}
  For every $\cL$-theory $T$ (dependent or not)
  there is a (unique) $\cL^R$-theory $T^R$   such that:
  \begin{enumerate}
  \item For every $p(\bar x) \in \tS_n(T^R)$ there is a unique Borel
    probability measure $\nu_p$ on $\tS_n(T^R)$ such that for every
    $n$-ary predicate symbol $\bE[\varphi(\bar x)] \in \cL^R$:
    \begin{gather*}
      \bE[\varphi(\bar x)]^p = \int \varphi^q\,d\nu_p(q).
    \end{gather*}
    The mapping $p \mapsto \nu_p$ is a bijection between
    $\tS_n(T^R)$ and the space of regular Borel probability measures.
    We will consequently identify the two spaces,
    thus identifying $p$ with $\nu_p$.
  \item The topology on $\tS_n(T^R)$ is the one of weak convergence.
    In other words, this is the weakest topology
    such that for every continuous function
    $\varphi\colon \tS_n(T) \to \setC$ the mapping
    $\mu \mapsto \int \varphi \, d\mu$ is continuous.
  \item For a mapping $f\colon m \to n$, the corresponding mapping
    $f^{*,R}\colon \tS_n(T^R) \to \tS_m(T^R)$ is given by associating to each
    type in $\tS_n(T^R)$, being a measure on $\tS_n(T)$,
    its image measure on $\tS_m(T)$ via the application
    $f^*\colon \tS_n(T) \to \tS_m(T)$.
    (Since $f^*\colon \tS_n(T) \to \tS_m(T)$ is continuous between compact
    spaces, the image of a regular measure is regular.)
  \item The distance predicate coincides with
    $\bE[d(x,y)]$.
  \end{enumerate}
  Moreover, $T^R$ eliminates quantifiers.

  Since every classical first order theory can be viewed as a
  continuous theory, the same applies if $T$ is a classical theory.
  In this case we may prefer to write $\bP[\varphi(\bar x)]$ instead of
  $\bE[\varphi(\bar x)]$.
  (In fact, the precise counterpart of $\bE[\varphi(\bar x)]$ is
  $\bP[\lnot\varphi(\bar x)]$ since $1$ is ``False'', but this is a minor issue.)
  In particular the distance predicate is then given by
  $\bP[x\neq y]$.
\end{thm}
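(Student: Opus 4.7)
The plan is to follow the construction of \cite{BenYaacov-Keisler:MetricRandom} and define $T^R$ as the theory of random $T$-valued variables, then verify each clause. Explicitly, given a probability space $(\Omega,\fB,\mu)$ and $\cM\models T$, form the $\cL^R$-structure whose underlying set consists of equivalence classes (modulo $\mu$-a.s.\ equality) of measurable maps $\Omega\to M$, equipped with the distance $\bE[d^\cM(x(\omega),y(\omega))]$ and with each symbol $\bE[\varphi(\bar x)]$ interpreted as $\int_\Omega \varphi^\cM(\bar x(\omega))\,d\mu(\omega)$. Take $T^R$ to be the common theory of all such structures; it can be axiomatised by the conditions asserting that $\bE[\cdot]$ behaves as a probability expectation (positivity, linearity, compatibility with the continuous connectives) together with the universal $\bE$-translations of the axioms of $T$.

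For clause (i), given a type $p \in \tS_n(T^R)$ realised by a tuple $\bar b$ in such a model, define $\nu_p$ as the pushforward of $\mu$ along the measurable map $\omega\mapsto \tp^T(\bar b(\omega)) \in \tS_n(T)$; regularity is automatic because $\tS_n(T)$ is compact Hausdorff, and the integral formula is simply change of variables. Uniqueness of $\nu_p$ follows from the Riesz representation theorem applied to the fact that the functions $\{\varphi^\bullet : \varphi \text{ an } \cL\text{-formula}\}$ are uniformly dense in $C(\tS_n(T),[0,1])$, by the density of continuous combinations through $\{\lnot,\half,\dotminus\}$ recalled from \cite{BenYaacov-Usvyatsov:CFO}. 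Surjectivity of $p \mapsto \nu_p$ is given by a tautological construction: given $\nu$, take $\Omega = \tS_n(T)$ with measure $\nu$, fix in some $\aleph_1$-saturated model of $T$ a realisation $\bar a_q$ of each $q$, and set $\bar b(q) = \bar a_q$.

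Clauses (ii)--(iv) now follow routinely: (ii) because both the logic topology on $\tS_n(T^R)$ and the weak convergence topology on measures are the coarsest making each $p \mapsto \int \varphi^\bullet\,d\nu_p$ continuous, using the same density argument; (iii) by a direct change-of-variables computation showing $\nu_{f^{*,R}p}(B) = \nu_p((f^*)^{-1}B)$; (iv) by construction.

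The technical heart of the theorem, and the step I expect to be the main obstacle, is quantifier elimination for $T^R$. This is carried out in \cite{BenYaacov-Keisler:MetricRandom} by a back-and-forth between $\aleph_1$-saturated $\cL^R$-models: the delicate point is realising consistent $1$-types over a submodel by passing to an extension of the underlying probability space without disturbing the interpretations already in place, which requires a careful measure-theoretic amalgamation argument. Once quantifier elimination is in hand, uniqueness of $T^R$ as an $\cL^R$-theory satisfying (i)--(iv) is immediate: clause (i) pins down the value of every atomic predicate on every type, hence any two theories satisfying the conditions agree on quantifier-free types, and quantifier elimination promotes this agreement to all complete types.
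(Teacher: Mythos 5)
Your overall strategy (build the concrete random-variable structures, identify a type with the distribution of $\omega\mapsto\tp(\bar b(\omega))$, and obtain quantifier elimination by a back-and-forth) is the route of \cite{BenYaacov-Keisler:MetricRandom} and is viable, but as written it has a genuine gap: taking $T^R$ to be ``the common theory of all such structures'' is not correct, because you omit the atomlessness requirement on $(\Omega,\fB,\mu)$ (and the accompanying fullness/maximal-witness axioms). With atoms permitted, clause (i) and quantifier elimination both fail for the resulting theory. Concretely, let $T$ be the classical theory of an infinite pure set. Over the one-point probability space the randomisation is just $\cM$ with $\{0,1\}$-valued predicates, so $\sup_y \min(\bP[x\neq y],\,1-\bP[x\neq y])$ evaluates to $0$ at every element, whereas over an atomless space it evaluates to $\half$; yet in both cases the element has the same quantifier-free type, namely the Dirac measure at the unique $1$-type of $T$. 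Hence the theory of the whole class has two distinct complete $1$-types with the same associated measure, $p\mapsto\nu_p$ is not injective, and quantifier elimination fails. You must add the atomless axiom, and correspondingly your ``tautological'' surjectivity construction over $\Omega=\tS_n(T)$ should be replaced by something like $\Omega=\tS_n(T)\times[0,1]$ with $\nu\otimes\lambda$, together with a measurable choice of realisations --- another point that needs care rather than being automatic.

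You should also know that the paper takes a different route for the continuous case, one that sidesteps both the concrete construction and the back-and-forth you flag as the main obstacle. It simply \emph{declares} $\tS_n(T^R)$ to be the space of regular Borel probability measures on $\tS_n(T)$ with the weak topology and the functorial structure of clause (iii), verifies that this is an open Hausdorff type-space functor, and invokes the representation theorem of \cite{BenYaacov:PositiveModelTheoryAndCats} and \cite{BenYaacov-Usvyatsov:CFO} to produce a theory having exactly these type spaces. On that route quantifier elimination is essentially free: since the $\cL$-formulae are uniformly dense in $C(\tS_n(T),[0,1])$, the atomic predicates $\bE[\varphi(\bar x)]$ already separate measures, i.e., separate complete types, and uniqueness of $T^R$ follows because the type spaces are completely prescribed. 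The price of that approach is reliance on the abstract machinery; the price of yours is the measure-theoretic amalgamation, plus the axiomatic corrections above.
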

\begin{proof}
  Uniqueness follows from the fact that the type spaces are entirely
  described.

  In the case $T$ is a classical theory, the explicit construction
  appears in \cite{BenYaacov-Keisler:MetricRandom}, where
  Keisler's original construction
  \cite{Keisler:Randomizing} is transferred
  from classical logic to the more adequate setting of
  continuous logic.

  A similar construction can in principle
  be carried out when $T$ is a continuous
  theory.
  Alternatively, let us consider $\tS_n(T^R)$ as a mere symbol
  denoting   the space of regular Borel
  probability measures on $\tS_n(T)$.
  Let $\tS(T^R)$ denote the mapping $n \mapsto \tS_n(T^R)$
  and let us equip it with the topological and
  functorial structure described in items (ii),(iii).
  Then $\tS(T^R)$ is an open Hausdorff
  type-space functor in the sense of
  \cite{BenYaacov:PositiveModelTheoryAndCats}.
  The predicates of $\cL^R$ can be interpreted in models of $\tS(T^R)$
  as per item (i), in which case $\bE[d(x,y)]$
  defines a metric on the models.
  By results appearing in \cite{BenYaacov-Usvyatsov:CFO} a continuous
  theory $T^R$ exists in \emph{some} language whose type space functor
  is $\tS_n(T^R)$.
  Since the $n$-ary $\cL$-formulae are dense among all
  continuous functions $\tS_n(T) \to [0,1]$,
  the atomic $\cL^R$-formulae $\bE[\varphi(\bar x)]$ separate types.
  It follows that $T^R$ can be taken to be an $\cL^R$-theory and that
  it eliminates quantifiers as such.
  We leave the details to the reader.
\end{proof}

Members of models of $T^R$  should be thought of as random
variables in models of $T$.
If ${\mathbf a},{\mathbf b},\ldots \in \cM \models T^R$
then their type
$\tp^R({\mathbf a},{\mathbf b},\ldots)$,
viewed as a probability measure,
should be thought of as the distribution measure of
the $\tS_n(T)$-valued random variable
$\omega \mapsto \tp({\mathbf a}(\omega),{\mathbf b}(\omega),\ldots)$.
Similarly
$\bE[\varphi({\mathbf a},{\mathbf b},\ldots)]$ is the expectation
of the random variable 
$\omega \mapsto \varphi({\mathbf a}(\omega),{\mathbf b}(\omega),\ldots)$,
and so on.
As we said in the introduction it is natural to ask whether
the randomisation of a dependent theory is dependent.

\begin{thm}
  \label{thm:RandomDependentTheory}
  Let $T$ be a dependent first order theory (classical or continuous).
  Then $T^R$ is dependent as well.
\end{thm}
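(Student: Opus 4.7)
The plan is to reduce the statement to a direct invocation of Corollary \fref{cor:RandomFuncVC} in two steps: first, use quantifier elimination to reduce to atomic formulas of $T^R$; second, represent those atomic formulas as expectations of a uniformly dependent random family.

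For the reduction, I would invoke the quantifier elimination of $T^R$ established in the theorem just above: in any model of $T^R$, every $\cL^R$-formula is a uniform limit of continuous combinations of atomic formulas $\bE[\varphi(\bar z)]$ with $\varphi$ an $\cL$-formula. By Lemma \fref{lem:DependentLimit} (uniform limits of dependent functions are dependent) and Proposition \fref{prp:DependentCombinations} (continuous combinations of dependent functions are dependent), it then suffices to show that each atomic formula $\bE[\varphi(\bar x,\bar y)]$ is dependent in $T^R$, for every partition of the variables of $\varphi$ into $\bar x,\bar y$.

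For the main step, I would fix $\cM\models T^R$. Using the randomisation construction recalled above, $\cM$ is to be viewed as consisting of random variables on a probability space $(\Omega,\fB,\mu)$ taking values in a model $\cN\models T$, subject to the identity
\[
\bE[\varphi(\bar{\mathbf a},\bar{\mathbf b})]^{\cM} = \int_{\Omega} \varphi^{\cN}(\bar{\mathbf a}(\omega),\bar{\mathbf b}(\omega))\,d\mu(\omega)
\]
for every $\cL$-formula $\varphi$ and all tuples $\bar{\mathbf a},\bar{\mathbf b}$ in $\cM$. Setting $X=\cM^{n}$, $Y=\cM^{m}$ and $\varphi_\omega(\bar{\mathbf a},\bar{\mathbf b}) = \varphi^{\cN}(\bar{\mathbf a}(\omega),\bar{\mathbf b}(\omega))$, the atomic predicate on $\cM$ is exactly the expectation $\bE[\varphi_\omega]$ of the family $\{\varphi_\omega\}_{\omega\in\Omega}$. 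Since $T$ is dependent, $\varphi^{\cN}$ is dependent, and each $\varphi_\omega$ is the pullback of $\varphi^{\cN}$ along the pair of evaluation maps $\bar{\mathbf a}\mapsto \bar{\mathbf a}(\omega)$, $\bar{\mathbf b}\mapsto \bar{\mathbf b}(\omega)$. Such pullbacks are dependent with VC-index bounded by $VC(\varphi^{\cN})$, as any shattered subset upstairs pushes forward to a shattered subset of the same size downstairs; hence $\{\varphi_\omega\}$ is uniformly dependent, and measurability of $\omega\mapsto\varphi_\omega(\bar{\mathbf a},\bar{\mathbf b})$ is immediate from the random-variable structure. Corollary \fref{cor:RandomFuncVC} then yields dependence of $\bE[\varphi_\omega]$, i.e., of $\bE[\varphi(\bar x,\bar y)]^{\cM}$, as desired.

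The main obstacle I anticipate is not combinatorial — all of that has been absorbed into Corollary \fref{cor:RandomFuncVC} — but rather the technical step of securing, uniformly over pairs of tuples, the random-variable presentation of $\cM$ and the integral identity above in the continuous-logic formulation of Keisler's construction. If such a presentation is not literally available on a given $\cM$, one can either pass to an appropriately saturated elementary extension or work directly on the type-space side: the type of any finite tuple in $\cM$ is by definition a Borel probability measure on the corresponding $T$-type space, so each atomic $\bE[\varphi(\bar x,\bar y)]$-predicate is interpretable as an integral of a uniformly dependent $T$-formula against these measures, placing us in the setting of Section \fref{sec:Randomise} with nothing more to prove.
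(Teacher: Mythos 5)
Your proposal is correct and follows essentially the same route as the paper: reduce via quantifier elimination, \fref{lem:DependentLimit} and the results of \fref{sec:Crush} to atomic formulae $\bE[\psi(\bar x,\bar y)]$, then realise each such predicate as the expectation of a uniformly dependent measurable family and invoke \fref{cor:RandomFuncVC}. Your ``fallback'' is in fact exactly what the paper does --- it takes the type of the \emph{entire} enumerated $M^n\cup M^m$ as a single measure on $\tS_{(I\times n)\cup(J\times m)}(T)$, which resolves in one stroke the gluing issue you raise about having a single probability space uniformly over all pairs of tuples (with uniform dependence across the fibres supplied by the compactness remark preceding the theorem).
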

\begin{proof}
  Every classical theory can be identified with a continuous theory
  via the identification of  $T$ with $0$, of $F$ with $1$ and
  of $=$ with $d$.
  We may therefore assume that $T$ is continuous.

  Let us first consider a formula of the form
  $\varphi(\bar x,\bar y) = \bE[\psi(\bar x,\bar y)]$.
  Let $\cM \models T^R$, and we need to show that $\varphi^\cM$ is dependent on
  $M^n \times M^m$.
  Let us enumerate
  $M^n = \{\bar a_i\colon i \in I\}$, $M^m = \{\bar b_j\colon j \in J\}$.
  Let $p = \tp(M^n , M^m/\emptyset)$.
  We may write it as $p( \bar x_i, \bar y_j )_{i\in I,j \in J} \in \tS_{I \cup J}(T^R)$,
  and identify it with a probability measure
  $\mu$ on $\Omega = \tS_{(I\times n) \cup (J\times m)}(T)$ such that for every formula $\rho(\bar z)$
  of the theory $T$, $\bar z \subseteq \{\bar x_i,\bar y_j\}_{i\in I,j\in J}$:
  \begin{gather*}
    \bE[\rho(\bar z)]^p = \int_\Omega \rho(\bar z)^q\, d\mu(q).
  \end{gather*}
  For $i \in I$, $j \in J$ and $q \in \Omega$ define:
  $\chi_q(i,j) = \psi(\bar x_i,\bar y_j)^q$.
  Then $\chi = \{\chi_q\colon q \in \Omega\}$ is a measurable family of
  $[0,1]$-valued functions on $I \times J$
  and $\varphi(\bar a_i,\bar b_j) = \varphi(\bar x_i,\bar y_j)^p = \bE[\chi](i,j)$
  where expectation is
  with respect to $\mu$.
  Since $T$ is dependent the family $\{\chi_q\colon q \in \Omega\}$ is uniformly
  dependent.
  By \fref{cor:RandomFuncVC} $\bE[\chi] \colon I\times J \to [0,1]$ is dependent.
  Equivalently, $\varphi\colon M^n \times M^m \to [0,1]$ is dependent.

  We have thus shown that every atomic formula is dependent.
  By   \fref{lem:DependentCombinations} every quantifier free formula
  is dependent.
  By quantifier elimination and \fref{lem:DependentLimit} every
  formula is dependent.
\end{proof}

We conclude this paper with a few extensions of classical results
regarding dependent formulae and theories to continuous logic.

\begin{lem}
  \label{lem:IndependentByIndiscernible}
  The following are equivalent for a formula $\varphi(\bar x,\bar y)$:
  \begin{enumerate}
  \item The formula $\varphi$ is independent.
  \item There exist a tuple $\bar a$ an, indiscernible sequence
    $(\bar b_n\colon n < \omega)$ and $0 \leq r < s \leq 1$ such that:
    \begin{gather*}
      \varphi(\bar a,\bar b_{2n}) \leq r, \qquad
      \varphi(\bar a,\bar b_{2n+1}) \geq s.
    \end{gather*}
  \item There exist a tuple $\bar a$ and indiscernible sequence
    $(\bar b_n\colon n < \omega)$ such that $\lim \varphi(\bar a,\bar b_n)$ does not exists.
  \end{enumerate}
\end{lem}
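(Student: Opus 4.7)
The plan is to prove the cycle (ii) $\Rightarrow$ (iii) $\Rightarrow$ (ii) $\Rightarrow$ (i) $\Rightarrow$ (ii), handling the two easy implications between (ii) and (iii) first. For (ii) $\Rightarrow$ (iii), the sequence $\varphi(\bar a, \bar b_n)$ cannot converge because even and odd terms are separated by the gap $s-r > 0$. For (iii) $\Rightarrow$ (ii), if $\lim \varphi(\bar a, \bar b_n)$ does not exist then $\liminf < \limsup$, so I choose $r < s$ strictly between them; the sets $A = \{n : \varphi(\bar a, \bar b_n) \le r\}$ and $B = \{n : \varphi(\bar a, \bar b_n) \ge s\}$ are both infinite, and interleaving them yields a strictly increasing sequence of indices on which the values alternate between ``$\le r$'' and ``$\ge s$''. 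Since any subsequence of an indiscernible sequence is indiscernible, this produces (ii).

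For (ii) $\Rightarrow$ (i), I fix $n$ and associate to each $w \subseteq n$ the strictly increasing indices $j_k^w = 2k+1$ if $k \in w$ and $j_k^w = 2k$ otherwise. Indiscernibility of $(\bar b_n)$ implies that the tuples $(\bar b_{j_0^w}, \ldots, \bar b_{j_{n-1}^w})$ and $(\bar b_0, \ldots, \bar b_{n-1})$ realise the same type, so an automorphic image $\bar a_w$ of $\bar a$ satisfies $\varphi(\bar a_w, \bar b_k) = \varphi(\bar a, \bar b_{j_k^w})$ for every $k < n$, which is $\le r$ when $k \notin w$ and $\ge s$ when $k \in w$. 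This is exactly the pattern witnessing that $\varphi_{r,s}^Y$ has infinite Vapnik-Chervonenkis index, i.e.\ that $\varphi$ is independent.

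For (i) $\Rightarrow$ (ii), compactness produces, in a sufficiently saturated model, a sequence $(\bar b_n)_{n<\omega}$ together with tuples $\bar a_w$ for $w \in \cP^f(\omega)$ realising the independence pattern at thresholds $r < s$. For each $n$, consider the continuous formula
\[
\psi_n(\bar y_0, \ldots, \bar y_{2n-1}) = \inf_{\bar x} \max_{k<n} \bigl( (\varphi(\bar x, \bar y_{2k}) \dotminus r) \lor (s \dotminus \varphi(\bar x, \bar y_{2k+1})) \bigr).
\]
For any increasing indices $i_0 < \cdots < i_{2n-1}$, choosing $w = \{i_1, i_3, \ldots, i_{2n-1}\}$ and the associated $\bar a_w$ shows that $\psi_n$ evaluates to $0$ on $(\bar b_{i_0}, \ldots, \bar b_{i_{2n-1}})$. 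I then extract an indiscernible sequence $(\bar b'_n)$ whose Ehrenfeucht--Mostowski type lies in the closure of the sub-EM type of $(\bar b_n)$; by indiscernibility, $\psi_n$ is constantly $0$ on consecutive blocks of $2n$ elements of $(\bar b'_n)$. A final compactness argument in the saturated model realises the partial type $\{\varphi(\bar x, \bar b'_{2k}) \le r, \; \varphi(\bar x, \bar b'_{2k+1}) \ge s : k < \omega\}$ by some $\bar a^*$, and $(\bar a^*, (\bar b'_n))$ witnesses (ii).

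The main obstacle lies in the last step, namely extracting a continuous-logic indiscernible sequence that preserves the formula value $\psi_n = 0$. In classical logic this is a routine application of Ramsey's theorem, but in the $[0,1]$-valued setting one must combine Ramsey with the compactness of $[0,1]$: for each continuous formula $\psi$ from a countable dense family, one selects nested infinite sets of indices on which $\psi$ takes values in shrinking intervals, and diagonalises. Once this standard extraction is in hand, the preservation of $\psi_n = 0$ is automatic because the $\ge$-threshold is a closed condition.
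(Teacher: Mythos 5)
Your proposal is correct and follows essentially the same route as the paper: compactness plus a Ramsey/Ehrenfeucht--Mostowski extraction (preserving the closed condition $\inf_{\bar x}(\cdots)=0$) for (i) $\Rightarrow$ (ii), and pulling witnesses back along the indiscernible sequence for the converse. The only differences are organisational: you factor the paper's direct (iii) $\Rightarrow$ (i) through (ii), and you make explicit, via the auxiliary formulae $\psi_n$, the ``standard arguments'' the paper invokes for the extraction step.
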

\begin{proof}
  \begin{cycprf}
  \item[\impnext]
    Assume $\varphi$ is independent, and let us work in a
    sufficiently saturated model.
    Then there are $0 \leq r < s\leq 1$ such that for all $m$ there are
    $(\bar b_n\colon n < m)$ and $(\bar a_w\colon w \subseteq m)$ satisfying:
    \begin{gather*}
      \varphi(\bar a_w,\bar b_n) \leq r \Longleftrightarrow n \in w, \qquad
      \varphi(\bar a_w,\bar b_n) \geq s \Longleftrightarrow n \notin w.
    \end{gather*}
    By compactness there exists an infinite sequence $(b_n\colon n < \omega)$
    such that for every finite $u \subseteq \omega$ and every $w \subseteq u$ there are
    $\bar a_{u,w}$ such that for all $n \in u$:
    \begin{gather*}
      \varphi(\bar a_{u,w},\bar b_n) \leq r \Longleftrightarrow n \in w, \qquad
      \varphi(\bar a_{u,w},\bar b_n) \geq s \Longleftrightarrow n \notin w.
    \end{gather*}
    By standard arguments using Ramsey's Theorem there exists an
    indiscernible sequence $(\bar b_n\colon n < \omega)$ having the same
    property.
    In particular for every $m$ there exists $\bar a_m$
    such that for all $n < m$:
    \begin{gather*}
      \varphi(\bar a,\bar b_{2n}) \leq r, \qquad
      \varphi(\bar a,\bar b_{2n+1}) \geq s.
    \end{gather*}
    The existence of $\bar a$ as desired now follows by compactness.
  \item[\impnext] Immediate.
  \item[\impfirst]
    Assume that $(\bar b_n\colon n < \omega)$ is indiscernible and
    $\lim_n \varphi(\bar a,\bar b_n)$ does not exist.
    Then there are $0 \leq r < s \leq 1$ such that
    $\varphi(\bar a,\bar b_n) < r$ and $\varphi(\bar a,\bar b_n) > s$ infinitely
    often.
    Then for every $m$ and every $w \subseteq m$ we can find
    $n_0 < \ldots < n_{m-1} < \omega$ such that
    $\varphi(\bar a,\bar b_{n_i}) < r$ if $i \in w$ and
    $\varphi(\bar a,\bar b_{n_i}) > s$ otherwise.
    By indiscernibility we can then find $\bar a_w$ such that
    $\varphi(\bar a,\bar b_i) < r$ if $i \in w$ and
    $\varphi(\bar a,\bar b_i) > s$ if $i \in m \setminus w$.
    Then $\varphi$ is independent.
  \end{cycprf}
\end{proof}

\begin{lem}
  \label{lem:PartialAverageSequenceType}
  Let $\bar a$ be a tuple,
  $(\bar b_n\colon n < \omega)$ an indiscernible sequence of tuples, and let
  $\varphi_s(\bar x,\bar y_0\ldots\bar y_{k_s-1})$ be dependent formulae for
  $s \in S$.
  Then there exists in an elementary extension of $\cM$
  an $\bar a$-indiscernible sequence
  $(\bar c_n\colon n < \omega)$ such that for all $s \in S$:
  \begin{gather*}
    \varphi_s(\bar a,\bar c_0\ldots\bar c_{k_s-1})
    =
    \lim \varphi_s(\bar a,\bar b_n\ldots\bar b_{n+k_s-1}).
  \end{gather*}
\end{lem}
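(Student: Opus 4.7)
The plan is to build $(\bar c_n)$ via compactness and a continuous Ramsey extraction, once the combinatorics of the limits $r_s := \lim_n \varphi_s(\bar a,\bar b_n\ldots\bar b_{n+k_s-1})$ has been pinned down. The first task is to check that these limits exist: if $\lim_n \varphi_s(\bar a,\bar b_n\ldots\bar b_{n+k_s-1})$ failed to exist for some $s$, then for some $r<r'$ one finds infinite disjoint $A,B\subseteq\omega$ on which the value is $\leq r$ and $\geq r'$ respectively; thinning $A\cup B$ to $N_1<N_2<\ldots$ alternating between $A$ and $B$ with $N_{j+1}-N_j>k_s$, the $k_s$-blocks $\bar B_j := \bar b_{N_j}\ldots\bar b_{N_j+k_s-1}$ are pairwise disjoint and occur in strictly increasing order within $(\bar b_n)$, so indiscernibility of $(\bar b_n)$ over $\emptyset$ makes $(\bar B_j)_j$ indiscernible as a sequence of $k_s$-tuples, and the alternation of $\varphi_s$-values contradicts dependence of $\varphi_s$ via \fref{lem:IndependentByIndiscernible}.

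The technical heart is the \emph{universal limit} statement: for any sequence of increasing tuples $(n^j_0<\ldots<n^j_{k_s-1})_{j<\omega}$ with $n^j_0\to\infty$, $\varphi_s(\bar a,\bar b_{n^j_0}\ldots\bar b_{n^j_{k_s-1}})\to r_s$. If this failed, after passing to a sub-sequence one would obtain a cluster value $v\neq r_s$ along such sparse tuples, and I would interleave ``packed'' blocks $\bar b_{N_\ell}\ldots\bar b_{N_\ell+k_s-1}$ on which $\varphi_s$ is near $r_s$ with ``spread'' blocks $\bar b_{n^{j_\ell}_0}\ldots\bar b_{n^{j_\ell}_{k_s-1}}$ on which $\varphi_s$ is near $v$, choosing $N_\ell$ and $j_\ell$ iteratively so that all chosen blocks occupy pairwise disjoint and strictly increasing segments of $\omega$. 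Because indiscernibility of $(\bar b_n)$ over $\emptyset$ depends only on the order of the indices drawn and not on any gap pattern, the interleaved block-sequence remains indiscernible, and the alternation of $\varphi_s$-values contradicts dependence via \fref{lem:IndependentByIndiscernible}. Verifying indiscernibility of this interleaved sequence despite the geometric contrast between its ``packed'' and ``spread'' entries is the main obstacle.

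With the universal limit available, the standard continuous Ramsey/compactness extraction yields, in an elementary extension $\cN\succeq\cM$, an $\bar a$-indiscernible sequence $(\bar c_n:n<\omega)$ whose Ehrenfeucht--Mostowski type over $\bar a$ is finitely realised by sub-sequences of $(\bar b_n)$: the partial type asserting $\bar a$-indiscernibility together with finite-sub-sequence consistency with $(\bar b_n)$ is finitely satisfiable in $\cM$ by the finite version of continuous Ramsey. For each $s\in S$, the value $\varphi_s(\bar a,\bar c_0\ldots\bar c_{k_s-1})$ is then a cluster value of $\varphi_s(\bar a,\bar b_{n_0}\ldots\bar b_{n_{k_s-1}})$ over increasing $(n_0,\ldots,n_{k_s-1})$ in $\omega$, and the universal limit forces every such cluster value to equal $r_s$. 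This simultaneously gives $\varphi_s(\bar a,\bar c_0\ldots\bar c_{k_s-1})=r_s$ for every $s\in S$, as required.
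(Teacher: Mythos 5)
Your proposal is correct and follows essentially the same route as the paper: the paper likewise extracts an $\bar a$-indiscernible $(\bar c_n)$ sandwiched between $\varliminf$ and $\varlimsup$ of $\varphi_s(\bar a,\bar b_{\bar n})$ over far-out disjoint increasing blocks, and then collapses these to a single limit by exactly your interleaving argument (two block-sequences with different limits can be alternated into one indiscernible block-sequence, contradicting dependence via Lemma~\ref{lem:IndependentByIndiscernible}). The step you flag as the main obstacle --- indiscernibility of the interleaved block-sequence --- is immediate, as you note, since the EM-type depends only on the order of the concatenated indices.
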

\begin{proof}
  For $k < \omega$ let $I_k$ consist of all increasing tuples
  $\bar n \in \omega^k$.
  We define a partial ordering on $I_k$ saying that
  $\bar n < \bar n'$ if $n_{k-1} < n_0'$.
  Then standard arguments using Ramsey's Theorem and compactness yield
  an $\bar a$-indiscernible sequence $(\bar c_n\colon n < \omega)$ such that for every
  $k$ and every formula $\varphi(\bar x,\bar y_0,\ldots,\bar y_{k-1})$:
  \begin{gather*}
    \varliminf_{\bar n \in I_k} \varphi(\bar a,\bar b_{\bar n})
    \leq
    \varphi(\bar a,\bar c_0,\ldots,\bar c_{k-1})
    \leq
    \varlimsup_{\bar n \in I_k} \varphi(\bar a,\bar b_{\bar n}),
  \end{gather*}
  where $\bar b_{\bar n} = \bar b_{n_0},\ldots,\bar b_{n_{k-1}}$.

  Let us now fix $s \in S$.
  If $(\bar n_m\colon m < \omega)$ is an increasing sequence in $I_{k_s}$
  then $(\bar b_{\bar n_m}\colon m < \omega)$ is an indiscernible
  sequence so $\lim_m \varphi_s(\bar a,\bar b_{\bar n_m})$ exists.
  Moreover, given two increasing sequences in $I_{k_s}$
  we can choose a third increasing sequence alternating between the
  two, so the limit does not depend on the choice of sequence.
  It follows that $\lim_{\bar n \in I_{k_s}} \varphi_s(\bar a,\bar b_{\bar n})$
  exists, and the assertion follows.
\end{proof}

\begin{thm}
  Assume $T$ is independent.
  Then there exists a formula $\varphi(x,\bar y)$, where $x$ is a singleton,
  which is independent.
\end{thm}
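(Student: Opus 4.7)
The plan is to prove the contrapositive by induction on $n = |\bar x|$: supposing every formula $\psi(x,\bar y)$ with $x$ a singleton is dependent, I will show that every formula $\varphi(\bar x,\bar y)$ is dependent. The base case $n = 1$ is exactly the hypothesis (and $n = 0$ is vacuous).

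For the induction step, assume every formula with at most $n - 1$ first-group variables is dependent, and let $\varphi(x_1,\ldots,x_n,\bar y)$ be given. Suppose toward a contradiction that $\varphi$ is independent, so by \fref{lem:IndependentByIndiscernible} there exist a tuple $\bar a = (a_1,\ldots,a_n)$ and an indiscernible sequence $(\bar b_k : k < \omega)$ for which $\lim_k \varphi(\bar a,\bar b_k)$ does not exist. I then regard the same formula $\varphi(x_1,\ldots,x_n,\bar y)$ as $\chi(x_2,\ldots,x_n; x_1,\bar y)$, whose shattering side has only $n - 1$ variables; by the induction hypothesis $\chi$ is dependent. Specialising the second-group coordinate $x_1$ to the value $a_1$ produces the definable predicate $\psi(x_2,\ldots,x_n,\bar y) := \varphi(a_1,x_2,\ldots,x_n,\bar y)$, and since the family of fuzzy sets associated to $\psi$ is a subfamily of the one associated to $\chi$ (namely the slice at $x_1 = a_1$), $\psi$ inherits dependence from $\chi$.

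Now I apply \fref{lem:IndependentByIndiscernible} to the dependent predicate $\psi$: since $\psi$ is not independent, there can be no tuple $\bar c$ and indiscernible sequence $(\bar d_k)$ for which $\lim_k \psi(\bar c,\bar d_k)$ fails to exist. Taking $\bar c = (a_2,\ldots,a_n)$ and $\bar d_k = \bar b_k$ yields that $\lim_k \varphi(\bar a,\bar b_k) = \lim_k \psi((a_2,\ldots,a_n),\bar b_k)$ exists, contradicting the choice of witness. The only mildly delicate point in this plan is that $\psi$ carries the parameter $a_1$ and is thus a definable predicate rather than a pure formula of $T$; but \fref{lem:IndependentByIndiscernible} is a combinatorial statement proved via compactness and Ramsey, both insensitive to expanding the language by constants, so it applies to $\psi$ just as well.
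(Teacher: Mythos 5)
Your overall strategy (induction on $|\bar x|$, equivalently a minimal counterexample, combined with the symmetry of dependence in the two variable groups and the characterisation of \fref{lem:IndependentByIndiscernible}) is the same skeleton as the paper's proof, and the steps up to and including the dependence of the specialised predicate $\psi(x_2,\ldots,x_n,\bar y) = \varphi(a_1,x_2,\ldots,x_n,\bar y)$ are fine. The gap is in the last step, and it is exactly the point you flag and then dismiss. When you apply \fref{lem:IndependentByIndiscernible} to $\psi$, you are working in the language expanded by a constant for $a_1$, and the lemma then only controls sequences that are indiscernible \emph{in that language}, i.e., indiscernible over $a_1$. Your sequence $(\bar b_k)$ is only indiscernible over $\emptyset$; in the proof of the implication \textup{(iii)}$\Rightarrow$\textup{(i)} the step ``by indiscernibility we can find $\bar a_w$'' moves the whole first-group tuple, so for $\psi$ it would move the parameter $a_1$ as well and only re-derive the independence of $\varphi(\bar x,\bar y)$, which you already knew. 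Indeed, the assertion you need --- that dependence of $\psi$ forces $\lim_k\psi((a_2,\ldots,a_n),\bar b_k)=\lim_k\varphi(\bar a,\bar b_k)$ to exist along a merely $\emptyset$-indiscernible sequence --- is literally the statement that $\varphi$ behaves dependently along this very witness, i.e., it is equivalent to what you are trying to contradict. So the argument is circular at that point, not merely a routine language-expansion issue.

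This is precisely the difficulty the paper's proof is built to circumvent: one cannot re-extract a sequence indiscernible over $a_1$ without destroying the non-convergence (a sequence indiscernible over the full tuple $\bar a$ gives constant, hence convergent, values). The paper instead quantifies out the offending variable by forming the formulas $\psi_m(\bar z,\bar y_0,\ldots,\bar y_{2m-1})=\inf_x\bigvee_{i<m}(\varphi(x\bar z,\bar y_{2i})\lor\lnot\varphi(x\bar z,\bar y_{2i+1}))$, which are genuine parameter-free formulae with shorter first group and hence dependent by minimality; it then uses \fref{lem:PartialAverageSequenceType} to pass to a $\bar b$-indiscernible sequence while preserving the (limiting) values of these dependent formulas, and finally recovers a single witness $a'$ by compactness. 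Some device of this kind is needed to repair your final step; as written, the proof does not go through.
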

\begin{proof}
  Let $\varphi(\bar x,\bar y)$ be an independent formula such that $\bar x$
  has minimal length.
  If it is of length one we are done.
  If not, we may write $\bar x = x\bar z$ and
  $\varphi = \varphi(x\bar z,\bar y)$.

  By \fref{lem:IndependentByIndiscernible}
  there are $a\bar b$ and a sequence $(\bar c_n\colon n < \omega)$
  is a model of $T$ as well as $0\leq r < s \leq 1$ such that
  \begin{gather*}
    \varphi(a\bar b,\bar c_{2n}) \leq r,\qquad
    \varphi(a\bar b,\bar c_{2n +1 }) \geq s.
  \end{gather*}
  Choosing $t \in (r,s)$ dyadic and replace
  $\varphi$ with $m(\varphi \dotminus t)$ for $m$ large enough we may assume that
  $r = 0$ and $s = 1$.
  For $m < \omega$ let:
  \begin{gather*}
    \psi_m(\bar z,\bar y_0,\ldots,\bar y_{2m-1})
    = \inf_x \bigvee_{i<m} (\varphi(x\bar z,\bar y_{2i}) \lor \lnot\varphi(x\bar z,\bar y_{2i+1})).
  \end{gather*}
  By assumption of minimality of $\bar x$ the formulae $\psi_n$ must be
  dependent.
  By \fref{lem:PartialAverageSequenceType} there is a
  $\bar b$-indiscernible sequence $(\bar c_n'\colon n < \omega)$ such that for
  all $m$:
  \begin{gather}
    \label{eq:IndependentSingleVariable}
    \psi_m(\bar b,\bar c_0'\ldots\bar c_{2m-1})
    =
    \lim \psi_m(\bar b,\bar c_n\ldots\bar c_{n+2m-1}).
  \end{gather}
  We know that $\psi_m(\bar b,\bar c_{2n},\ldots,\bar c_{2n+2m-1}) = 0$, as
  this is witnessed by $a$.
  Therefore the limit in \fref{eq:IndependentSingleVariable} must be
  equal to zero, and thus
  $\psi_m(\bar b,\bar c_0'\ldots\bar c_{2m-1}) = 0$ for all $m$.
  By a compactness argument there exists $a'$ such that
  \begin{gather*}
    \varphi(a'\bar b,\bar c'_{2n}) = 0, \qquad
    \varphi(a'\bar b,\bar c'_{2n+1}) = 1.
  \end{gather*}
  Changing our point of view a little we observe that
  $(\bar b\bar c_n'\colon n < \omega)$ is an indiscernible sequence and
  \begin{gather*}
    \varphi(a',\bar b\bar c'_{2n}) = 0, \qquad
    \varphi(a',\bar b\bar c'_{2n+1}) = 1.
  \end{gather*}
  Thus $\varphi(x,\bar z\bar y)$ is independent and $\bar x$ was not minimal
  after all.
\end{proof}

\providecommand{\bysame}{\leavevmode\hbox to3em{\hrulefill}\thinspace}
\providecommand{\MR}{\relax\ifhmode\unskip\space\fi MR }
\providecommand{\MRhref}[2]{%
  \href{http://www.ams.org/mathscinet-getitem?mr=#1}{#2}
}
\providecommand{\href}[2]{#2}

\end{document}